\definecolor{indigo}{RGB}{51,0,102}
\definecolor{brightpurple}{RGB}{102,0,153}
\definecolor{fuchsia}{RGB}{180,51,180}
\definecolor{jolightpurple}{RGB}{188,171,240}
\newcommand{\pl}{P_\Lambda}
\newcommand{\A}{\mathbb{A}}
\title{Symplectic embeddings of four-dimensional polydisks into balls}
\author{Katherine Christianson\footnote{Partially supported by NSF grants DMS-1206667, DMS-0970108, and a Graduate Research Fellowship.} \ and Jo Nelson\footnote{Supported by NSF grant DMS-1303903 and the Charles Simonyi Endowment at the Institute for Advanced Study.}
        }
\date{}
\begin{document}

\maketitle

\begin{abstract}
In this paper we obtain new obstructions to symplectic embeddings of the four-dimensional polydisk $P(a,1)$ into the ball $B(c)$ for $2\leq a \leq \frac{\sqrt{7}-1}  {\sqrt{7}-2} \approx 2.549$, extending work done by Hind-Lisi and Hutchings.  Schlenk's folding construction permits us to conclude our bound on $c$ is optimal.   Our proof makes use of the combinatorial criterion necessary for one ``convex toric domain'' to symplectically embed into another introduced by Hutchings in \cite{Beyond}.  {We also observe that the computational complexity of this criterion can be reduced from $O(2^n)$ to $O(n^2)$.}
\end{abstract}

\tableofcontents

\section{Introduction}
\subsection{New obstructions to embeddings of four-dimensional polydisks}
In this paper we investigate the question of when one convex toric symplectic four-manifold can be symplectically embedded into another.  In particular, we obtain new sharp obstructions to symplectic embeddings of the four-dimensional polydisk $P(a,1)$ into the ball $B(c)$.   {In addition, we prove that the computational complexity in \cite{Beyond} of obstructing symplectic embeddings of convex toric four manifolds can be reduced.}

Four-dimensional toric manifolds are defined as follows.
%and they admit a symplectic structure coming from $\omega_{\C^2}$.
\begin{definition}
  Let $\Omega$ be a domain in the first quadrant of $\R^2$. Then, 
  we associate to $\Omega$ a subset $X_\Omega$ of $\C^2$ defined by
  \[X_\Omega = \{(z_1,z_2) \in \C^2\ |\ (\pi|z_1|^2,\pi|z_2|^2) \in \Omega\}.\]
  $X_\Omega$ is a symplectic manifold with symplectic form given
  by the restriction of the standard form on $\C^2$, namely
  \[\omega = dx_1 \wedge dy_1 + dx_2 \wedge dy_2.\]
  We call $X_\Omega$ the \emph{toric domain} associated to $\Omega$.
  Suppose that $\Omega$ is of the form
  \[\Omega = \{(x,y) \in \R^2\ |\ 0 \leq x \leq A, 0 \leq y \leq f(x)\},\]
  where $f: [0,A] \to \R_{\geq 0}$ is a nonincreasing function. If $f$
  is concave, then we say that $X_\Omega$ is a \emph{convex toric domain}.
  If $f$ is convex, then we say that $X_\Omega$ is a \emph{concave toric domain}.
\end{definition}
\begin{example}
  Let $\Omega$ be the triangle in $\R^2$ with vertices $(0,0)$, $(a,0)$, and $(0,b)$
  for any $a,b > 0$. Then, $X_\Omega$ is the 4-dimensional ellipsoid
  \[E(a,b)=\left\{(z_1,z_2) \in \C^2\ \left|\ \frac{\pi|z_1|^2}{a} + \frac{\pi|z_2|^2}{b} \leq 1\right.\right\}.\]
When $a = b$, $X_\Omega$ is the 4-dimensional ball $B(a) = E(a,a)$. The ellipsoid $E(a,b)$ is both a concave and a convex toric domain,
  since $\Omega$ is the region lying beneath the line $f(x) = (-b/a)x + b$ in the first quadrant of $\R^2$.
\end{example}
\begin{example}
  Let $\Omega$ be the rectangle in $\R^2$ with vertices $(0,0)$, $(a,0)$,
  $(0,b)$, and $(a,b)$ for any $a,b > 0$. Then, $X_\Omega$ is the polydisk
  \[P(a,b)=\{(z_1,z_2) \in \C^2\ |\ \pi|z_1|^2 \leq a, \pi|z_2|^2 \leq b\}.\]
The polydisk $P(a,b)$ is a convex toric domain, since
  $\Omega$ is the region lying beneath the constant function $f(x) = b$ on the
  interval $[0, a]$.
\end{example}

In dimension 4, progress has been made on understanding questions concerning symplectic embeddings. In \cite{H11}, Hutchings associates to any symplectic four-manifold $(X, \omega)$ with (contact) boundary a sequence of real numbers,
\[0 = c_0(X) \leq c_1(X) \leq c_2(X) \leq \dots, \]
such that if $X$ symplectically embeds into $X'$, then
\[c_k(X) \leq c_k(X') \mbox{ for all } k.\]
 The $c_k$ are called \emph{ECH capacities} (here ECH stands for
``embedded contact homology," which Hutchings uses to define the capacities).  Work by Choi, Cristofaro-Gardiner, Frenkel, Hutchings, and Ramos \cite{CCGFHR},  computed the ECH capacities of all concave toric domains, yielding sharp obstructions to certain symplectic embeddings of concave toric domains.   Cristofaro-Gardiner \cite{CG14} showed that ECH capacities give sharp obstructions to symplectic embeddings of any concave toric domain into any convex toric domain.  His result generalizes the results of McDuff \cite{dusaellipsoid1}-\cite{dusaellipsoid3} and Frenkel-M\"uller \cite{FM}.

Obstructions via ECH capacities are suboptimal in the case of symplectic
embeddings of a convex toric domain into a concave toric domain. For instance,
 the ECH capacities of polydisks and balls (which Hutchings explicitly
computes in \cite{H11}) imply that there is no symplectic embedding of $P(2,1)$ into $B(c)$ for $c < 2$.  However, a result due to Hind and Lisi \cite{HL15} indicates that $P(2,1)$ does not symplectically embed into $B(c)$ for any $c < 3$.

For this reason, Hutchings studied embedded contact homology in a more refined
way than is used to define the ECH capacities.  As a result, he was able to
give a new combinatorial criterion for obstructing symplectic embeddings,
\cite[Theorem 1.19]{Beyond}, which we will hereafter term \emph{the Hutchings
criterion}. The Hutchings criterion is a somewhat complicated combinatorial
condition; we will defer a full description of it to the next section. Hutchings used this criterion to demonstrate several new bounds on embeddings of
polydisks into balls, ellipsoids, and polydisks. 

Our first result is the following extension of results by Hutchings \cite[Theorem 1.4]{Beyond} and Hind-Lisi \cite{HL15} on symplectic embeddings of polydisks into balls.
\begin{theorem}
  \label{thm:14ext}
Let
  \[2 \leq a \leq \frac{\sqrt{7}-1}{\sqrt{7}-2} = 2.54858\dots .\]
If $P(a,1)$ symplectically embeds
  into $B(c)$ then

  \[c \geq 2+\frac{a}{2}.\]
\end{theorem}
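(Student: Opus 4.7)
The plan is to apply the Hutchings criterion \cite[Theorem 1.19]{Beyond}, which attaches combinatorial data to ``convex integer paths'' $\Lambda$ and provides necessary inequalities for a symplectic embedding of one convex toric domain into another. Assuming an embedding $P(a,1) \hookrightarrow B(c)$, the strategy is to produce, for each $a$ in the stated range, an explicit path $\Lambda_a$ whose associated inequality forces $c \geq 2 + a/2$.

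First, I would make the criterion concrete in the polydisk-into-ball setting: the target-side quantity is proportional to $c$ and is read off from the simple action spectrum of the Reeb flow on $\partial B(c)$, while the source-side quantity $P_\Lambda \cdot P(a,1)$ is a straightforward weighted sum over the corners of $\Lambda$ depending linearly on $(a,1)$. The goal is then to find $\Lambda_a$ whose corner data is arranged so that the source side evaluates to $2 + a/2$ while the target side equals $c$, up to a common normalization. The endpoint $a = \frac{\sqrt{7}-1}{\sqrt{7}-2}$ has the shape of a quadratic extremum, which suggests that the optimal $\Lambda_a$ has a single non-axial edge whose slope depends rationally on $a$, and that the upper end of the interval is precisely the threshold at which this slope becomes inadmissible; the construction should reduce at $a = 2$ to the path underlying Hind--Lisi's bound $c \geq 3$, and interpolate continuously as $a$ increases.

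The main obstacle will be verifying that $\Lambda_a$ satisfies the auxiliary conditions of the Hutchings criterion, in particular the combinatorial index equality and the inequalities that must hold for every ``sub-generator'' of $\Lambda_a$. This is where the complexity-reduction result of the present paper, from $O(2^n)$ to $O(n^2)$, should prove essential: rather than checking every subset of edges, it suffices to examine a polynomial family of subpaths, which makes the required verification tractable and, I expect, reduces the inequalities on sub-generators to a short list of cases that can be dispatched by elementary estimates on the slopes and lengths of the edges of $\Lambda_a$.
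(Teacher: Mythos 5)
There is a genuine gap here: you have the logical structure of the Hutchings criterion inverted. The criterion (Theorem~\ref{thm:119ext}, or \cite[Thm.\ 1.19]{Beyond}) is an \emph{existence} statement: if $P(a,1)$ embeds into $B(c)$, then for a chosen generator $\Lambda'$ that is minimal for the \emph{target} $B(c)$, there exists \emph{some} convex generator $\Lambda$ on the source side and \emph{some} factorization satisfying (i)--(iii). Exhibiting one explicit path $\Lambda_a$ and computing its action therefore proves nothing: the action condition reads $A_{P(a,1)}(\Lambda)\leq A_{B(c)}(\Lambda')$, an upper bound on the action of whatever $\Lambda$ the criterion produces, and it only becomes an obstruction once you show that for $c<2+a/2$ \emph{no} admissible $\Lambda$ and no admissible factorization exist. (This is exactly why ECH capacities --- which do give one inequality per index --- fail here, yielding only $c\geq 2$ for $P(2,1)$.) The paper's proof accordingly takes $\Lambda'=e_{1,1}^D$ for large $D$ and runs an exhaustion: Proposition~\ref{prop:allbd} shows no $\Lambda\leq_{P(a,1),B(c)}e_{1,1}^d$ exists for $d$ large (this is where the threshold $\frac{\sqrt7-1}{\sqrt7-2}$ enters, as a root of the $d^2$-coefficient in a quadratic inequality); Proposition~\ref{prop:noreps} rules out repeated factors $(\Lambda_i,\Lambda_i')=(\Lambda_j,\Lambda_j')$; and an index count then shows $I\bigl(\prod_i\Lambda_i'\bigr)$ is bounded independently of $D$, contradicting $I(e_{1,1}^D)=D(D+3)$.

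Two further points. First, your guess that the optimal path has a single non-axial edge of $a$-dependent slope, with the endpoint of the interval as an admissibility threshold, describes the generators $e_{1,0}^Fe_{m,1}e_{0,1}^V$ of Proposition~\ref{thm:optlt} --- but those appear in the paper to show the \emph{opposite}: for $a>\frac{\sqrt7-1}{\sqrt7-2}$ such $\Lambda$ \emph{do} satisfy $\Lambda\leq_{P(a,1),B(c)}e_{1,1}^d$ with $c<2+a/2$, so the criterion can no longer obstruct. Second, the $O(2^n)\to O(n^2)$ reduction is not used to check ``sub-generators'' of a single path; it replaces the condition that $I$ agree on products over all subsets $S\subseteq\{1,\dots,n\}$ of the factorization with the pairwise condition (iii), which is what feeds into Proposition~\ref{prop:noreps}. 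As written, your argument would need to be restructured from ``construct a witness'' to ``rule out all witnesses'' before any of the estimates could be set up.
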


\begin{remark} \em
The bound on $c$ in this theorem is optimal: in \cite[Prop. 4.3.9]{Schlenk},
Schlenk uses ``symplectic folding" to construct a symplectic embedding $P(a,1) \into B(c)$
whenever $a > 2$ and $c > 2 + a/2$. 
\end{remark}

\begin{remark} \em
 Hutchings proved the statement of Theorem~\ref{thm:14ext} for $2 \leq a \leq 2.4$ using the Hutchings criterion and conjectured that the full statement of Theorem~\ref{thm:14ext} could be proven using the Hutchings criterion \cite{HBlog}.  Our proof thus answers this conjecture in the affirmative.
\end{remark}

The proof of Theorem~\ref{thm:14ext} can be found in Section~\ref{sec:14ext}.
In Section \ref{sec:14opt} we discuss how extending these results for larger values of $a$ is unlikely to be achieved via the Hutchings criterion or its improvement \cite[Conj. A.3]{Beyond} established by \cite{K16}. For $a > 4,$ it is known that there are symplectic embeddings of $P(a,1)$ into $B(c)$ for some values with  $c < 2 + a/2$; see \cite[Fig. 7.2]{Schlenk}.

%Our other result is technical, yielding a combinatorial simplification of the Hutchings criterion for obstructing symplectic embeddings. This reduces the amount of computations needed to verify the existence of obstructions from $O(2^n)$ to $O(n^2)$. We state it in the following section after reviewing the necessary background.

%\katcomment{I wasn't sure if you wanted Proposition~\ref{prop:repcond} to be  considered as a separate result? If you do, I think we should probably  change the proposition to a theorem. Also, I've revised the previous  paragraph to refer to the proposition as a result, just in case} 
  
%\tv{Hmm, I think Proposition~\ref{prop:repcond} should be a separate result but it should not get theorem status.  Numbering is messed up...come back and double check}

Our other result is Theorem \ref{thm:bullet3-pairs}, which pertains to the
technical details of the Hutchings criterion. It yields a combinatorial
simplification of the Hutchings criterion for obstructing
symplectic embeddings. This reduces the amount of computations needed to
verify the existence of obstructions from $O(2^n)$ to $O(n^2)$. We state the
result in Section \ref{sec:modify} after reviewing the necessary background.

\subsection{Review of convex generators}
\label{sec:119review}
We begin by defining the principal combinatorial objects involved in stating the Hutchings criterion. Our exposition closely follows \cite[Section 1.3]{Beyond}.

\begin{definition}
  A \emph{convex integral path} $\Lambda$ is a path in $\R^2$ such that:
  \begin{itemize}
    \item The endpoints of $\Lambda$ are $(0,y(\Lambda))$ and
      $(x(\Lambda),0)$ for some non-negative integers $x(\Lambda)$ and
      $y(\Lambda)$.
    \item The path $\Lambda$ is the graph of a piecewise linear concave function
      $f: [0,x(\Lambda)] \to [0,y(\Lambda)]$ with $f'(0) \leq 0$, possibly
      together with a vertical line segment at the right.
          \item The vertices of $\Lambda$ (i.e.\ the points at which its slope
      changes) are lattice points.
  \end{itemize}
\end{definition}
\begin{definition}
  A \emph{convex generator} is a convex integral path $\Lambda$ such that:
  \begin{itemize}
    \item Each edge of $\Lambda$ (i.e.\ each line segment between two vertices)
      is labelled $e$ or $h$.
    \item Horizontal and vertical edges can only be labelled $e$.
  \end{itemize}
\end{definition}

Because we will work with convex generators frequently, we require a compact
notation for them. For any nonnegative, coprime integers $a$ and $b$ and 
any positive integer $m$, we will denote by $e_{a,b}^m$ an edge of a convex
generator that is labelled $e$ and has displacement vector $(ma,-mb)$.
Similarly, $h_{a,b}$ denotes an edge labelled $h$ that has displacement
vector $(a,-b)$, while $e_{a,b}^{m-1}h_{a,b}$ denotes an edge labelled $h$
that has displacement vector $(ma,-mb)$. Since a convex generator is uniquely
specified by the set of its edges, this notation provides an equivalence
between a convex generator and a commutative formal product of symbols
$e_{a,b}$ and $h_{a,b}$, where no two distinct factors $h_{a,b}$ and $h_{c,d}$
have $a = c$ and $b = d$ and where there are no factors of $h_{1,0}$ or
$h_{0,1}$.

As explained in \cite[\S 6]{Beyond}, the boundary of any convex toric domain can be perturbed so that for its induced contact form and up to large action, the ECH generators correspond to these convex generators.  Before continuing to draw parallels with ECH, we first describe a few useful aspects of convex
generators.
\begin{definition}
  Let $\Lambda_1$ and $\Lambda_2$ be convex generators. Then, we say that $\Lambda_1$
  and $\Lambda_2$ \emph{have no elliptic orbit in common} if, when we write out
  $\Lambda_1$ and $\Lambda_2$ as formal products, no factor of $e_{a,b}$
  appears in both $\Lambda_1$ and $\Lambda_2$. Likewise, we say that
  $\Lambda_1$ and $\Lambda_2$ \emph{have no hyperbolic orbit in common} if, when we  write out $\Lambda_1$ and $\Lambda_2$ as formal products, no factor of
  $h_{a,b}$ appears in both $\Lambda_1$ and $\Lambda_2$.
\end{definition}

If $\Lambda_1$ and $\Lambda_2$ are convex generators with no hyperbolic orbit
in commmon, then we define the \emph{product} $\Lambda_1 \cdot \Lambda_2$ to be
the convex generator obtained by concatenating the formal product
expressions of $\Lambda_1$ and $\Lambda_2$. This product operation is associative whenever it is defined.

There are several combinatorial quantities associated to a convex generator
that will be of interest to us.
\begin{definition}
  Let $\Lambda$ be any convex generator.
  \begin{enumerate}
    \item The quantity $L(\Lambda)$ is the number of \emph{lattice points interior to and on the boundary of} the region bounded by $\Lambda$ and the $x$- and  $y$-axes.
     
    \item The quantity $m(\Lambda)$ is the \emph{total multiplicity} of all the edges of  $\Lambda$, i.e.\ the total exponent of all factors of $e_{a,b}$ and $h_{a,b}$ in the formal product for $\Lambda$. Note that $m(\Lambda)$ is equal to one less than the number of lattice points on the path $\Lambda$. 
    
    \item The quantity $h(\Lambda)$ is the number of edges of $\Lambda$ labelled $h.$
  \end{enumerate}
\end{definition}

Remarkably, one can actually express the ECH index in terms of the above combinatorial data associated to convex generators. 

\begin{definition}
If $\Lambda$ is a convex generator, define the \emph{ECH index} of $\Lambda$ to be
      \[I(\Lambda) = 2(L(\Lambda) - 1) - h(\Lambda).\]
\end{definition}

\begin{definition}
  Let $\Lambda$ be a convex generator, and let $X_\Omega$ be a convex toric
  domain. We define the \emph{symplectic action} of $\Lambda$ with
  respect to $X_\Omega$ by
  \[A_\Omega(\Lambda) = A_{X_\Omega}(\Lambda) = \sum_{\nu \in \Edges(\Lambda)} \vec{\nu} \times p_{\Omega, \nu}.\]
  Here, for any edge $\nu$ of $\Lambda$, $\vec{\nu}$ denotes the displacement
  vector of $\nu$, and $p_{\Omega, \nu}$ denotes any point on the line $\ell$
  parallel to $\vec{\nu}$ and tangent to $\partial\Omega$.  Tangency means that
  $\ell$ touches $\partial\Omega$ and that $\Omega$ lies entirely in one
  closed half plane bounded by $\ell$. Moreover, `$\times$' denotes the
   the determinant of the matrix whose columns are given by the
  two vectors.
\end{definition}
Next, we compute the symplectic action of any convex generator with respect to our favorite toric domains.

\begin{example}\label{polyaction}

\begin{itemize}
\item[]
 \item If $X_\Omega = P(a,b)$ is a polydisk, then for any convex generator $\Lambda$,
  \[A_{P(a,b)}(\Lambda) = bx(\Lambda) + ay(\Lambda).\]
\item  If $X_\Omega = E(a,b)$ is an ellipsoid, then for any convex generator $\Lambda$,
  then $A_{E(a,b)}(\Lambda) = c$, where the line $bx + ay = c$ is tangent to
  $\Lambda$ at some point. 
  \end{itemize}
\end{example}

We have yet another definition, which is essential for computing   ECH capacities combinatorially.  
\begin{definition}
  Let $X_\Omega$ be a convex toric domain. We say that a convex generator $\Lambda$
  with $I(\Lambda) = 2k$ for some integer $k$ is \emph{minimal} for $X_\Omega$ if:
  \begin{itemize}
    \item All edges of $\Lambda$ are labelled $e$.
    \item For any other convex generator $\Lambda'$ with all edges labelled $e$
      such that $I(\Lambda') = 2k$, we have
      \[A_\Omega(\Lambda) < A_\Omega(\Lambda').\]
  \end{itemize}
\end{definition}
The symplectic action of minimal generators is related to ECH capacities as follows.
\begin{remark} \em
By \cite[Prop 5.6]{Beyond} if $I(\Lambda)=2k$ and $\Lambda$ is minimal for $X_\Omega$ then $A_\Omega(\Lambda) = c_k(X_\Omega)$.  
\end{remark}

Our final definition will be key to understanding when one convex toric domain can be symplectically embedded into another convex toric domain.
%and appears in the Hutchings criterion.  
\begin{definition}\label{cobordismineq}
  Let $X_\Omega$ and $X_{\Omega'}$ be convex toric domains, and let $\Lambda$
  and $\Lambda'$ be convex generators. We write
  $\Lambda \leq_{X_\Omega,X_{\Omega'}} \Lambda'$ or
  $\Lambda \leq_{\Omega,\Omega'} \Lambda'$ if
  \begin{enumerate}[label=(\arabic*)]
    \item $I(\Lambda) = I(\Lambda')$,
    \item $A_\Omega(\Lambda) \leq A_{\Omega'}(\Lambda')$, and
    \item $x(\Lambda) + y(\Lambda) - \frac{h(\Lambda)}{2} \geq x(\Lambda') + y(\Lambda') + m(\Lambda') - 1.$
  \end{enumerate}
\end{definition}
In particular, if $X_\Omega$  symplectically embeds into $X_\Omega'$, then the resulting cobordism between their (perturbed) boundaries implies that $\Lambda \leq_{X_\Omega,X_{\Omega'}} \Lambda'$ is a necessary condition for the existence of an embedded irreducible holomorphic curve with ECH index zero between the ECH generators corresponding to $\Lambda$ and $\Lambda'$. The inequality (3) is what ultimately allowed Hutchings to go ``beyond" ECH capacities in his criterion.  It emerges from the fact that every holomorphic curve must have nonnegative genus \cite[Prop 3.2]{Beyond}.

We now have all the ingredients needed to state the Hutchings criterion and our modification.

%%%%%%%%%%%%%START HERE
\subsection{A modification of the Hutchings criterion}\label{sec:modify}
The statement of the criterion we use to obstruct symplectic embeddings will be very similar to the one given by Hutchings in \cite[Thm 1.19]{Beyond}.  Our modification reduces the amount of computation required to check the criterion.
\begin{theorem}[The Modified Hutchings criterion via Thm 1.19 \cite{Beyond}] \hfill
  \label{thm:119ext}
  \smallskip
  
  \noindent Let $X_\Omega$ and $X_{\Omega'}$ be convex toric domains and $\Lambda'$
  be a minimal generator for $X_{\Omega'}$. Suppose that $X_\Omega$
  symplectically embeds into $X_{\Omega'}$. Then, there exists a convex
  generator $\Lambda$, a nonnegative integer $n$, and factorizations
  $\Lambda' = \Lambda_1' \cdots \Lambda_n'$ and
  $\Lambda = \Lambda_1 \cdots \Lambda_n$ such that:
  \begin{enumerate}[label={\em(\roman*)}]
    \item For all $i$, $\Lambda_i \leq_{\Omega,\Omega'} \Lambda_i'$;
    \item For all $i \neq j$, if $\Lambda_i' \neq \Lambda_j'$ or
      $\Lambda_i \neq \Lambda_j$, then $\Lambda_i$ and $\Lambda_j$ have no
      elliptic orbit in common; and
    \item For all $i \neq j$, we have
      $I(\Lambda_i \cdot \Lambda_j) = I(\Lambda_i' \cdot \Lambda_j').$
  \end{enumerate}
\end{theorem}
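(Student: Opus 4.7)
The plan is to derive Theorem~\ref{thm:119ext} from Hutchings' original criterion \cite[Thm~1.19]{Beyond}, whose conditions (i) and (ii) agree with ours but whose condition (iii) instead demands the stronger equality $I\bigl(\prod_{i\in S}\Lambda_i\bigr) = I\bigl(\prod_{i\in S}\Lambda_i'\bigr)$ for every subset $S\subseteq\{1,\ldots,n\}$. Since our (iii) is the special case $|S|=2$, the original criterion \emph{a fortiori} implies ours; the content of the modification is the converse observation that the pairwise checks already force the equality on every subset, which drops the verification cost from $O(2^n)$ to $\binom{n}{2}=O(n^2)$.

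The key algebraic step is to establish the quadratic-form identity
$$I(\Lambda_1\cdots\Lambda_n) \;=\; \sum_{i} I(\Lambda_i) \;+\; \sum_{i<j}\bigl[I(\Lambda_i\cdot\Lambda_j) - I(\Lambda_i) - I(\Lambda_j)\bigr],$$
valid whenever the product is defined. Granting this, condition (i) supplies $I(\Lambda_i)=I(\Lambda_i')$ and our (iii) supplies $I(\Lambda_i\cdot\Lambda_j)=I(\Lambda_i'\cdot\Lambda_j')$, so the right-hand side agrees for $\prod_{i\in S}\Lambda_i$ and $\prod_{i\in S}\Lambda_i'$, yielding the all-subsets equality and reducing our criterion to Hutchings' original.

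To prove the identity I would unpack $I(\Lambda)=2(L(\Lambda)-1)-h(\Lambda)$ via Pick's theorem. The edge-count statistics $h(\Lambda)$, $m(\Lambda)$, $x(\Lambda)$, and $y(\Lambda)$ are all additive under the product (the edges of $\Lambda_1\cdots\Lambda_n$ form the multiset union of the edges of the factors), so Pick reduces the whole question to how the planar area of the convex region $R_\Lambda$ beneath $\Lambda$ behaves under products. The crucial geometric observation is that $R_{\Lambda_1\cdots\Lambda_n}=R_{\Lambda_1}+\cdots+R_{\Lambda_n}$ as a Minkowski sum: the upper-left boundary of a planar Minkowski sum of convex bodies is obtained by concatenating the edges of the summands in order of decreasing slope, which is exactly how the product of convex generators is defined. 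The classical mixed-area expansion
$$\mathrm{Area}\bigl(R_{\Lambda_1}+\cdots+R_{\Lambda_n}\bigr) \;=\; \sum_i \mathrm{Area}(R_{\Lambda_i}) \;+\; 2\sum_{i<j} V(R_{\Lambda_i},R_{\Lambda_j})$$
then promotes $I$ to a quadratic expression in the $\Lambda_i$; the pairwise combination $I(\Lambda_i\cdot\Lambda_j)-I(\Lambda_i)-I(\Lambda_j)$ algebraically encodes $4V(R_{\Lambda_i},R_{\Lambda_j})$, and substituting recovers the claimed identity.

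The main obstacle is the careful verification of the Minkowski identification when several $\Lambda_i$ contribute edges of identical slope, since such edges merge into a single edge of the concatenated product while remaining separate ``virtual'' contributions to the summands. Comparing the support functions of $R_{\Lambda_1\cdots\Lambda_n}$ and $\sum_i R_{\Lambda_i}$ in each outward normal direction handles this cleanly; once secured, the remainder of the argument is routine bookkeeping with Pick's theorem and the additivity of $h$, $m$, $x$, and $y$ under products.
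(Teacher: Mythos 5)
Your proposal is correct, and the first observation already disposes of the theorem as stated: since condition (iii) is the $|S|=2$ instance of Hutchings' condition (iii)$'$, Theorem~\ref{thm:119ext} follows \emph{a fortiori} from \cite[Thm 1.19]{Beyond}, which is exactly how the paper treats it. Where you genuinely diverge from the paper is in the converse (the claim that the pairwise checks recover the all-subsets condition, i.e.\ Corollary~\ref{thm:bullet3-pairs}). Your identity $I(\Lambda_1\cdots\Lambda_n)=\sum_i I(\Lambda_i)+\sum_{i<j}\bigl[I(\Lambda_i\cdot\Lambda_j)-I(\Lambda_i)-I(\Lambda_j)\bigr]$ is algebraically the same as the paper's Proposition~\ref{prop:echindex}, $I(Z_1+\cdots+Z_n)=\sum_{i<j}I(Z_i+Z_j)-(n-2)\sum_i I(Z_i)$, but the paper proves it at the level of relative homology classes: it isolates the linear part $L_\tau=c_\tau+CZ^I_\tau$ (linearity of the total Conley--Zehnder term being a special feature of the perturbed contact forms on boundaries of convex toric domains, via Step 4 of \cite[Lemma 5.4]{Beyond}) and uses the quadraticity of the relative intersection pairing $Q_\tau$. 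You instead work entirely with the combinatorial definition $I(\Lambda)=2(L(\Lambda)-1)-h(\Lambda)$, identify $P_{\Lambda_1\cdots\Lambda_n}$ with the Minkowski sum $P_{\Lambda_1}+\cdots+P_{\Lambda_n}$ (the slope-ordered concatenation of edges is precisely the boundary of the Minkowski sum), and invoke the mixed-area expansion together with Pick's theorem and the additivity of $x$, $y$, $m$, $h$. This is sound --- the merging of equal-slope edges and the degenerate case of generators lying on a single axis need the care you indicate, and the paper's own Lemma~\ref{lem:PickI2} and Proposition~\ref{prop:multIform} supply exactly the two-factor instance of your computation --- and it buys you a self-contained, purely combinatorial argument that never appeals to the ECH machinery or to the linearity hypothesis on $CZ^I_\tau$; what the paper's route buys is greater generality (it is a statement about arbitrary relative homology classes satisfying the $CZ$-additivity hypothesis, not only about convex generators) and a one-line proof once the linear/quadratic structure of the index is in hand.
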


\begin{remark} \em The difference between Theorem \ref{thm:119ext} and the original Hutchings criterion \cite[Thm 1.19]{Beyond} is in
the third bullet point, where Hutchings' formulation reads:
\begin{itemize}
  \item[(iii)$^\prime$] {\em If $S$ is any subset of $\{1,\dots,n\}$, then}
    $I\left(\prod_{i \in S}\Lambda_i\right) = I\left(\prod_{i \in S}\Lambda_i'\right)$.
\end{itemize}
\end{remark}

We do not lose any information by replacing  (iii) with (iii)$^\prime$ in the Hucthing criterion because of the following proposition and corollary.  Definitions of the terms appearing in the below proposition as well as the proof can be found in Sections \ref{prelim}-\ref{echindexsec}.
 
%In regards to obstructing symplectic embeddings, we show that the conditions on indices of products of three or more factors (as in (iii)$^\prime$) do not contribute any information beyond the conditions on indices of single factors and products of two factors (as in (i) and (iii)). That it is sufficient to consider the indices of single factors and indices of products of two factors, i.e. that (i) and (iii) imply (iii)$^\prime$, is the content of our following result.

\begin{proposition}\label{prop:echindex}
Let  $Z_1,...,Z_n$ be relative homology classes, and assume that $CZ_\tau^I(Z_1+...+Z_n) = CZ_\tau^I(Z_1) +...+CZ_\tau^I(Z_n)$.  Then
\[
I(Z_1 +...+Z_n)= \sum_{i<j}I(Z_i +Z_j) - (n-2) \sum_{i=1}^n I(Z_i).
\]
\end{proposition}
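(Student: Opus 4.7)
My plan is to decompose the ECH index into its three standard pieces and verify the identity term by term. Recall that for a relative homology class $Z$ the ECH index admits the decomposition $I(Z) = c_\tau(Z) + Q_\tau(Z) + CZ_\tau^I(Z)$, where the relative first Chern class $c_\tau$ is additive, the relative self-intersection $Q_\tau$ is quadratic with an associated symmetric bilinear pairing, and $CZ_\tau^I$ is additive on the particular classes $Z_1, \ldots, Z_n$ by the standing hypothesis. It therefore suffices to establish the claimed identity separately for an arbitrary additive functional $L$ and for an arbitrary quadratic functional $Q$, and then sum.

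The additive case is a one-line combinatorial check: each $Z_i$ appears in exactly $n-1$ of the pairwise sums $Z_i + Z_j$, so $\sum_{i<j} L(Z_i + Z_j) = (n-1)\sum_i L(Z_i)$, and subtracting $(n-2)\sum_i L(Z_i)$ leaves $\sum_i L(Z_i) = L(Z_1 + \cdots + Z_n)$. For the quadratic case I would use the polarization expansion $Q(Z_i + Z_j) = Q(Z_i) + Q(Z_j) + B(Z_i, Z_j)$, where $B$ is the associated symmetric bilinear form; summing over $i<j$ produces $(n-1)\sum_i Q(Z_i) + \sum_{i<j} B(Z_i, Z_j)$, and then subtracting $(n-2)\sum_i Q(Z_i)$ recovers exactly the full polarization
\[ Q(Z_1 + \cdots + Z_n) = \sum_i Q(Z_i) + \sum_{i<j} B(Z_i, Z_j). \]

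Adding the contributions from $c_\tau$, $Q_\tau$, and $CZ_\tau^I$ then yields the claim. I do not anticipate any real obstacle: the hypothesis has isolated the only non-formal ingredient (additivity of Conley--Zehnder indices on the given classes), after which the identity reduces to the general algebraic fact that any function of the form \emph{additive plus quadratic} satisfies the stated relation. The one thing that needs care is the bookkeeping of the coefficient $n-1$, which arises both in the linear case and in the cancellation of the diagonal terms in the quadratic case.
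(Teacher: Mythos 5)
Your proposal is correct and follows essentially the same route as the paper: the paper groups $c_\tau + CZ_\tau^I$ into a single linear functional $L_\tau$ and expands $Q_\tau\bigl(\sum_i Z_i\bigr)$ via its quadratic/bilinear structure, which is exactly your additive-plus-quadratic decomposition with the same $(n-1)$ versus $(n-2)$ bookkeeping. The only cosmetic difference is that your bilinear form $B$ equals $2Q_\tau(\cdot,\cdot)$ in the paper's notation.
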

Moreover, the assumption of Proposition \ref{prop:echindex} is satisfied for the special contact form arising on the boundary of convex toric domains, by the discussion in Step 4 of the proof of \cite[Lemma 5.4]{Beyond}.  We thus obtain the following corollary since $I(\Lambda)$ is by definition $I$ of any relative homology class between $\Lambda$ and the empty set.  

\begin{corollary}
  \label{thm:bullet3-pairs}
  Let $\{\Lambda_i'\}_{i = 1}^n$ and $\{\Lambda_i\}_{i = 1}^n$ be two sets of
  convex generators such that the $\Lambda_i'$ have no hyperbolic orbit in common
  and the $\Lambda_i$ have no hyperbolic orbit in common. Suppose that for any
  $1 \leq i \leq n$,
  \[I(\Lambda_i) = I(\Lambda_i'),\]
  and moreover that, for any $i \neq j$,
  \[I(\Lambda_i \cdot \Lambda_j) = I(\Lambda_i' \cdot \Lambda_j').\]
  Then, for any subset $S \subseteq \{1,2,\dots,n\}$,
  \[I\left(\prod_{i \in S} \Lambda_i\right) = I\left(\prod_{i \in S} \Lambda_i'\right).\]
\end{corollary}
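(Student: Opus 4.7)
The plan is to apply Proposition~\ref{prop:echindex} twice---once to $\{\Lambda_i\}_{i\in S}$ and once to $\{\Lambda_i'\}_{i\in S}$---and then match terms on the right-hand sides using the hypotheses on singletons and pairs.

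First I would fix an arbitrary subset $S \subseteq \{1,\ldots,n\}$, set $k = |S|$, and introduce, for each $i \in S$, a relative homology class $Z_i$ between the orbit set represented by $\Lambda_i$ and the empty set (with respect to the perturbed contact form on the boundary of the convex toric domain, as in Section~\ref{sec:119review}); define $Z_i'$ analogously. The hypothesis that no two of the $\Lambda_i$ share a hyperbolic orbit guarantees that the formal product $\prod_{i \in S} \Lambda_i$ is well-defined, and that its associated relative homology class is $\sum_{i \in S} Z_i$; likewise for the primed generators. The remark following Proposition~\ref{prop:echindex} provides the Conley--Zehnder additivity hypothesis $CZ_\tau^I\bigl(\sum_{i \in S} Z_i\bigr) = \sum_{i \in S} CZ_\tau^I(Z_i)$ in this setting.

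Applying Proposition~\ref{prop:echindex} then yields
\[
I\!\left(\prod_{i \in S} \Lambda_i\right) = \sum_{\substack{i, j \in S \\ i < j}} I(\Lambda_i \cdot \Lambda_j) - (k-2) \sum_{i \in S} I(\Lambda_i),
\]
together with the identical formula for $\prod_{i \in S} \Lambda_i'$. Using $I(\Lambda_i) = I(\Lambda_i')$ and $I(\Lambda_i \cdot \Lambda_j) = I(\Lambda_i' \cdot \Lambda_j')$, the two right-hand sides agree term-by-term, giving the desired equality. The degenerate cases $|S| \leq 2$ are trivial or already hypothesized.

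The main obstacle I anticipate is not the combinatorics but the bookkeeping: I need to check that the additivity hypothesis of Proposition~\ref{prop:echindex} applies to $\sum_{i \in S} Z_i$ for \emph{every} subset $S$ (not only for the full sum), and that the product $\prod_{i \in S} \Lambda_i$ does indeed correspond to the class $\sum_{i \in S} Z_i$ in the relevant sense. Both points should follow from Step~4 of the proof of \cite[Lemma~5.4]{Beyond}, and once they are granted the corollary collapses to a single application of the identity in Proposition~\ref{prop:echindex}.
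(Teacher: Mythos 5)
Your proposal is correct and is essentially the paper's own argument: the corollary is deduced exactly by applying Proposition~\ref{prop:echindex} to the relative homology classes associated to $\{\Lambda_i\}_{i\in S}$ and $\{\Lambda_i'\}_{i\in S}$ (the Conley--Zehnder additivity hypothesis being supplied, for every subset $S$, by Step~4 of the proof of \cite[Lemma 5.4]{Beyond}) and matching the singleton and pairwise index terms. The bookkeeping points you flag are precisely what the paper disposes of by the remark that $I(\Lambda)$ is by definition $I$ of any relative class between $\Lambda$ and the empty set, so no further work is needed.
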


%\begin{corollary}
 % Theorem~\ref{thm:119ext} implies Theorem 1.19 of \cite{Beyond}.
%\end{corollary}
%\begin{proof}
  %Suppose that Theorem~\ref{thm:119ext} is true, and let $X_\Omega$, $X_{\Omega'}$, and $\Lambda'$ be as in the statement of the theorem. Then, we can apply Theorem~\ref{thm:119ext} to get  $\Lambda = \Lambda_1 \cdots \Lambda_n$ and  $\Lambda' = \Lambda_1' \cdots \Lambda_n'$. These already satisfy every condition  of Theorem 1.19 of \cite{Beyond} except for the third bullet point. On the  other hand, bullet 1 of Theorem~\ref{thm:119ext} implies in particular that  $I(\Lambda_i) = I(\Lambda_i')$ for all $i$. Using this along with bullet 3 of Theorem~\ref{thm:119ext}, we see that the conditions of  Theorem~\ref{thm:bullet3-pairs} are satisfied, so we can apply that theorem  to conclude that the third bullet point of Theorem 1.19 of \cite{Beyond}  is satisfied. Thus, the $\Lambda_i$ and $\Lambda_i'$ also satisfy the  statement of Theorem 1.19 of \cite{Beyond}.\end{proof}

The proof of Proposition~\ref{prop:echindex} is given in Section~\ref{indexcalc}.  %It relies on a formula for the index of the product of two convex generators, which we prove in Section~\ref{sec:multIform}.  
We note that while Theorem \ref{thm:119ext} is technically weaker than the original Hutchings criterion, Corollary~\ref{thm:bullet3-pairs}  demonstrates that it is actually equivalent to the original Hutchings criterion, \cite[Thm 1.19]{Beyond}.  Thus if we want to check whether some $\Lambda$ obstructs a certain symplectic embedding, it is enough to check whether the conditions in Theorem~\ref{thm:119ext} can be satisfied.  

\begin{remark} \em
Checking that  (iii)$^\prime$ is satisfied requires comparing two indices of convex generators in $O(2^n)$ different scenarios.  Checking that  (iii) is satisfied requires comparing two indices in $O(n^2)$ different scenarios. This vast reduction in complexity is beneficial in many circumstances.
\end{remark}

\noindent \textbf{Outline of paper.} {Properties of the ECH index of two convex generators including the proof of Proposition \ref{prop:echindex} are given in Section \ref{indexcalc}.} The proof of the main embedding result, Theorem \ref{thm:14ext} is given in Section \ref{sec:14ext}. Appendix \ref{sec:14opt} contains a brief discussion on the difficulties in extending Theorem \ref{thm:14ext}.

\begin{acknowledgements}
  This paper grew out of REUs at Columbia University supervised by Jo Nelson
  and Rob Castellano.  We would like to thank Michael Hutchings for suggesting
  this project and for his comments on an earlier draft of this article. For helpful discussions we thank Michael again in addition to Daniel Cristofaro-Gardiner.   {We greatly appreciated the anonymous referee's comments and suggestions, including the simplified proof of Corollary \ref{thm:bullet3-pairs}.} We would also like to thank Robert
  Lipschitz and Chiu-Chu Melissa Liu for making these REUs
  possible.
\end{acknowledgements}

%%%%%%%%%%%%%%%%%%%%

\section{{Index calculations}}\label{indexcalc}
\label{sec:119ext}
In this section we prove Proposition \ref{prop:echindex}.
A formula for the index of the product of two convex generators is proven
in Section \ref{sec:multIform}. %We then use this formula to proveTheorem~\ref{thm:bullet3-pairs} in Section~\ref{sec:bullet3-pairs}.

\subsection{{Preliminary definitions}}\label{prelim}

Let $Y$ be a closed 3-dimensional manifold with a nondegenerate contact form $\lambda$. Let $\xi=\ker(\lambda)$ denote the associated contact structure, and let $R$ denote the Reeb vector field determined by $\lambda$. A {\em Reeb orbit\/} is a map $\gamma:\R/T\Z\to Y$, for some $T>0$, such that $\gamma'(t)=R(\gamma(t))$.  Let $\varphi_t:Y\to Y$ denote the time $t$ Reeb flow. The derivative of $\varphi_t$ at $\gamma(0)$ restricts to a map
\[
d\varphi_t: (\xi_{\gamma(0)}, d\lambda) \to (\xi_{\gamma(t)}, d\lambda).
\]
The \emph{linearized return map} is the map
\begin{equation}
\label{slm}
P_{\gamma}:=d\varphi_T : (\xi_{\gamma(0)},d\lambda) \to (\xi_{\gamma(0)},d\lambda).
\end{equation}
We say that $\gamma$ is \emph{elliptic} if the eigenvalues of $P_{\gamma}$ are on the unit circle, \emph{positive hyperbolic} if the eigenvalues of $P_{\gamma}$ are positive, and \emph{negative hyperbolic} if the eigenvalues of $P_{\gamma}$ are negative.

An \emph{orbit set} is a finite set of pairs $\alpha=\{(\alpha_i,m_i)\}$, where the $\alpha_i$ are distinct embedded Reeb orbits and the $m_i$ are positive integers.  We call $m_i$ the \emph{multiplicity} of $\alpha_i$ in $\alpha$.  The homology class of the orbit set $\alpha$ is defined by
\[
[\alpha]=\sum_i m_i [\alpha_i] \in H_1(Y).
\]
The orbit set $\alpha$ is \emph{admissible} if $m_i=1$ whenever $\alpha_i$ is positive or negative hyperbolic.

Let $\tau$ be a trivialization of $\xi$ over $\gamma$, namely an isomorphism of symplectic vector bundles
\[
\tau: \gamma^*\xi \stackrel{\simeq}{\longrightarrow} (\R/T\Z) \times \R^2.
\]
With respect to this trivialization, the linearized flow $(d\varphi_t)_{t\in[0,T]}$ induces an arc of symplectic matrices $P:[0,T]\to \mbox{Sp}(2)$ defined by
\[
\label{symparc}
P_{t} = \tau(t) \circ d\phi_t \circ \tau(0)^{-1}.
\]
To each arc of symplectic matrices $\{P_t\}_{t\in[0,T]}$ with $P_0=1$ and $P_T$ nondegenerate, there is an associated Conley Zehnder index $CZ(\{P_t\}_{t\in[0,T]})\in\Z$.  We define the \emph{Conley-Zehnder index} of $\gamma$ with respect to $\tau$ by
\[
CZ_\tau(\gamma) = CZ\left(\{P_t\}_{t\in[0,T]}\right).
\]
This depends only on the homotopy class of the trivialization $\tau$.

%%%%%%%%%%%%%%%%%%%%
\subsection{{The ECH index}}\label{echindexsec}

Let $\alpha=\{(\alpha_i,m_i)\}$ and $\beta=\{(\beta_j,n_j)\}$ be Reeb orbit sets  in the same homology class, $\sum_i[\alpha_i]=\sum_j[\beta_j]=\Gamma\in H_1(M).$  Let  $H_2(Y,\alpha,\beta)$ denote the set of 2-chains $Z$ in $Y$ with $\partial Z = \sum_i m_i\alpha_i - \sum_j n_j\beta_j$, modulo boundaries of 3-chains.  The set $H_2(Y,\alpha,\beta)$ is an affine space over $H_2(Y)$.

Given $Z \in H_2(Y,\alpha,\beta)$, we define the \textbf{ECH index} to be
\[
I(\alpha,\beta,Z) = c_\tau(Z) + Q_\tau(Z) +  \sum_i \sum_{k=1}^{m_i}CZ_\tau(\alpha_i^k) - \sum_j \sum_{k=1}^{n_j}CZ_\tau(\beta_j^k),
\]
where $Q_\tau$ is the relative intersection pairing defined in \cite[\S 3.3]{Hu2} and $c_\tau(Z)$ is the relative first Chern class \cite[\S 3.2]{Hu2} of $\xi$ over $Z$ with respect to $\tau$.  The relative intersection pairing is an analogue of the intersection number $[C] \cdot [C]$ for closed curves $C$. As a shorthand, we define
\[
CZ^I_\tau(\alpha) = \sum_i \sum_{k=1}^{m_i}CZ_\tau(\alpha_i^k).
\]
The ECH index does not depend on the choice of trivialization $\tau$.

We note that the Chern class term is linear in the homology class and the relative intersection term is quadratic. The ``total Conley-Zehnder" index term $CZ_\tau^I$ typically behaves in a complicated way with respect to addition of homology classes.  However, we can conclude for the special contact form arising on the boundary of convex toric domains, that the total Conley-Zehnder index term is linear by the discussion in Step 4 of the proof of \cite[Lemma 5.4]{Beyond}.  The addition operation on homology classes to which we refer is spelled out in \cite[Lem. 3.10]{revisited}. Thus, it is reasonable that one only needs to consider ECH indices of one and two term products.

Next we restate and prove Proposition \ref{prop:echindex}.

\begin{proposition} Let  $Z_1,...,Z_n$ be relative homology classes, and assume that $CZ_\tau^I(Z_1+...+Z_n) = CZ_\tau^I(Z_1) +...+CZ_\tau^I(Z_n)$.  Then
\[
I(Z_1 +...+Z_n)= \sum_{i<j}I(Z_i +Z_j) - (n-2) \sum_{i=1}^n I(Z_i).
\]
\end{proposition}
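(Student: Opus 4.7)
My plan is to decompose $I(Z) = c_\tau(Z) + Q_\tau(Z) + CZ_\tau^I(Z)$ and verify, for each of the three pieces $X \in \{c_\tau, Q_\tau, CZ_\tau^I\}$ separately, the analogous telescoping identity
\[
X(Z_1+\cdots+Z_n) \;=\; \sum_{i<j} X(Z_i+Z_j) \;-\; (n-2)\sum_{i=1}^n X(Z_i).
\]
Summing the three resulting equations gives the claimed formula for $I$.

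For the two linear pieces this is elementary. The relative first Chern class $c_\tau$ is linear in the relative homology class, and by hypothesis $CZ_\tau^I$ is linear on the full sum; the same reasoning (Step 4 of the proof of \cite[Lemma 5.4]{Beyond}) in fact gives linearity of $CZ_\tau^I$ on each pairwise sum $Z_i+Z_j$, so I will either appeal to this directly or read the hypothesis as covering every partial sum. For any linear functional $X$, a direct count gives $\sum_{i<j} X(Z_i+Z_j) = (n-1)\sum_i X(Z_i)$, whence the right-hand side of the telescoping identity collapses to $\sum_i X(Z_i) = X(Z_1+\cdots+Z_n)$.

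The substantive step is the identity for the relative intersection pairing $Q_\tau$. Here I would use that $Q_\tau$ is quadratic, arising by polarization from a symmetric bilinear pairing $\langle \cdot,\cdot\rangle_\tau$ on relative homology classes (cf.\ \cite[\S 3.3]{Hu2}), so that $Q_\tau(Z+W) = Q_\tau(Z) + Q_\tau(W) + 2\langle Z,W\rangle_\tau$. Expanding both sides of the desired identity in this pairing gives
\[
Q_\tau(Z_1+\cdots+Z_n) \;=\; \sum_i Q_\tau(Z_i) + 2\sum_{i<j} \langle Z_i, Z_j\rangle_\tau
\]
on the one hand, and
\[
\sum_{i<j} Q_\tau(Z_i+Z_j) - (n-2)\sum_i Q_\tau(Z_i) \;=\; \sum_i Q_\tau(Z_i) + 2\sum_{i<j} \langle Z_i, Z_j\rangle_\tau
\]
on the other, so the two sides coincide.

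The main obstacle is setting up the polarization cleanly for the version of $Q_\tau$ that takes values on the affine space of relative homology classes with the addition operation of \cite[Lem.~3.10]{revisited}; once this is in place everything else is combinatorial bookkeeping. Adding the three termwise identities then produces the asserted formula for $I(Z_1+\cdots+Z_n)$.
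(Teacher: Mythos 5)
Your proposal is correct and follows essentially the same route as the paper: both arguments rest on linearity of $c_\tau$ and (by hypothesis) $CZ_\tau^I$, together with the quadratic/polarization identity for $Q_\tau$, and the same combinatorial count showing the diagonal terms appear $n-1$ times while each cross term $2\langle Z_i,Z_j\rangle_\tau$ appears once; the paper merely bundles the two linear pieces into a single functional $L_\tau = c_\tau + CZ_\tau^I$ rather than treating them separately. Your observation that linearity of $CZ_\tau^I$ is also needed on the pairwise sums $Z_i+Z_j$ is a fair point, and is resolved exactly as you suggest, since Step 4 of the proof of Lemma 5.4 in the cited reference gives linearity on all partial sums in the setting where the proposition is applied.
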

\begin{proof}
Let $L_\tau$  denote the sum $c_\tau+CZ_\tau^I$, which is linear under our assumptions. 
Then
\begin{eqnarray}%{lcl}
I \left( \sum_{i=1}^n Z_i \right) &=& L_\tau\left( \sum_{i=1}^n Z_i  \right) +  Q_\tau \left( \sum_{i=1}^n Z_i  \right) \\
&=&  \sum_{i=1}^n [ L_\tau \left( Z_i \right)  + Q_\tau \left(Z_i\right)]+2\sum_{i=1}^nQ_\tau \left( Z_i,Z_j \right) \label{a} \\
&=&  \sum_{i=1}^n  I \left( Z_i +Z_j \right) - (n-2) \sum_{i=1}^n\label{b}I \left( Z_i \right).
\end{eqnarray}

The second line (\ref{a}) holds here because of the linearity of $L_\tau$, the quadratic property of $Q_\tau$ by \cite[Eq. 3.11]{revisited}, and the fact that $Q_\tau(Z,\cdot)$ is linear in $\cdot$ by definition. The third line (\ref{b}) holds because the $2Q_\tau(Z_i, Z_j)$ terms coming from the terms in the first sum each appear exactly once, while the terms in the first sum that only depend on $Z_i$ all appear $n-1$ times. %Here $Q_\tau(Z)$ denotes the pairing $Q_\tau(Z,Z).$
 \end{proof}

\subsection{The index of the product of two convex generators}
\label{sec:multIform}
While we have already proven Proposition~\ref{thm:bullet3-pairs}, we include the following purely combinatorial description of the index of the product of two convex generators.   We expect this to be useful to the future study of obstructing symplectic embeddings of other convex toric domains into concave toric domains.  Before giving the general formula of the index of the product of two convex generators, we first provide an example to elucidate the combinatorial intuition.

Recall that given a convex generator $\Lambda$, $\mathbb{A}(\Lambda)$ was
defined to be the \emph{area of $P_\Lambda$}. Similarly, if $\nu$ is an
edge $\Lambda$ we define $\mathbb{A}_\nu(\Lambda)$ to be the
\emph{area of the portion of $P_\Lambda$ lying underneath $\nu$}. 
We will also need some additional notation as follows.  For any convex generator $\Lambda$ and any edge
$\nu$ of $\Lambda$, we write $\nu_x$ and $\nu_y$ for the $x$- and
$y$-coordinates of the displacement vector of $\nu$. We also define the \emph {slope} of $\nu$ to be
\[
\mu(\nu)= \frac{\nu_y}{\nu_x}.
\]

\begin{example}
  Let $\Lambda = e_{1,0}^3e_{2,1}e_{1,3}$, and let
  $\Gamma = e_{2,1}e_{0,1}^2$. Using \eqref{eqn:PickI2} along with the
  additivity of $b$ and $h$, we have
  \begin{equation}
    \label{eqn:multIex-I}
    \begin{array}{lcl}
    I(\Lambda\cdot\Gamma) &= & 2\A(\Lambda\cdot\Gamma) + b(\Lambda\cdot\Gamma) - h(\Lambda\cdot\Gamma) \\
    & = & 2\A(\Lambda\cdot\Gamma) + b(\Lambda) + b(\Gamma) - h(\Lambda) - h(\Gamma). \\
    \end{array}
  \end{equation}
  We can compute $\A(\Lambda\cdot\Gamma)$ by summing the area under each of the
  edges of $\Lambda\cdot\Gamma= e_{1,0}^3e_{2,1}^2e_{1,3}e_{0,1}^2$.
  
  For any edge $\nu$ of $\Lambda$, the region underneath $\nu$ in
  $\Lambda\cdot\Gamma$ will be essentially the same shape as the region
  under $\nu$ in $\Lambda$, except that $\nu$ may be higher up (i.e.\ its
  endpoints may have larger $y$-coordinates) in the product
  $\Lambda\cdot\Gamma$. To see this, notice that the $y$-coordinate of the
  lower right endpoint of $\nu$ in $\Lambda$ is
  \[y_\Lambda = \sum_{\stackrel{\sigma \in \Edges(\Lambda)}{\mu(\sigma) < \mu(\nu)}} \sigma_y,\]
  while the $y$-coordinate of the lower right endpoint of $\nu$ in
  $\Lambda\cdot\Gamma$ is
  \[y_{\Lambda\cdot\Gamma} = \sum_{\stackrel{\sigma \in \Edges(\Lambda\cdot\Gamma)}{\mu(\sigma) < \mu(\nu)}} \sigma_y.\]
  Thus, every edge $\sigma$ of $\Gamma$ that is steeper than $\nu$ will
  contribute a term of $\sigma_y$ to $y_{\Lambda\cdot\Gamma}$ which is not
  in $y_\Lambda$, so that the edge $\nu$ in $\Lambda\cdot\Gamma$ will be
  translated upwards by $\sigma_y$ relative to the position of $\nu$ in
  $\Lambda$. This translation is equivalent to taking the region beneath
  $\nu$ in $\Lambda$ and adding a rectangle to the bottom of it. So,
  $A_{\Lambda\cdot\Gamma}(\nu)$ will be equal to $A_\Lambda(\nu)$ plus the
  area of several rectangle added beneath $\nu$. Thinking of area in this way
  allows us to break up the area under each edge in $\Lambda\cdot\Gamma$ into
  individual contributions from different edges, as shown in
  Figure~\ref{fig:multIex}.

  \begin{figure}
    \centering
    \begin{tikzpicture}[scale=0.6]
      %Draw \Lambda
      \draw[->] (0,0) -- (7,0) node[right] {$x$}
      node[midway, below, black] {$\Lambda = e_{1,0}^3e_{2,1}e_{1,3}$};
      \draw[->] (0,0) -- (0,5) node[above] {$y$};
      \filldraw[fill=orange!30!white,draw=orange!50!black] (0,0) rectangle (3,4)
      node[midway, black!80!white] {$\A_\Lambda(e_{1,0}^3)$};
      \filldraw[fill=orange!30!white,draw=orange!50!black] (3,0) -- (3,4) --
      (5,3) node[midway, above, sloped, black!80!white] {$\A_\Lambda(e_{2,1})$}
      -- (5,0) -- cycle;
      \filldraw[fill=orange!30!white,draw=orange!50!black] (5,0) -- (5,3) -- (6,0)
      node[midway, above, sloped, black!80!white] {$\A_\Lambda(e_{1,3})$} -- cycle;

      %Draw \Gamma
      \draw[->] (9,0) -- (12,0) node[right] {$x$}
      node[midway, below, black] {$\Gamma = e_{2,1}e_{0,1}^2$};
      \draw[->] (9,0) -- (9,4) node[right] {$y$}; 
      \filldraw[fill=green!20!white,draw=green!50!black] (9,0) -- (9,3) -- (11,2)
      node[pos=.6, below, sloped, font=\scriptsize, black!80!white]
      {$\A_\Gamma(e_{2,1})$} -- (11,0) -- cycle;

      %Draw \Lambda\cdot\Gamma
      \draw[->] (14,0) -- (23,0) node[right] {$x$}
      node[midway, below, black] {$\Lambda\cdot\Gamma = e_{1,0}^3e_{2,1}^2e_{1,3}e_{0,1}^2$};
      \draw[->] (14,0) -- (14,8) node[right] {$y$};
      %Stuff under 3E(1,0)
      \filldraw[fill=orange!30!white, draw=orange!50!black] (14,3) rectangle (17,7)
      node[midway, black!80!white] {$\A_\Lambda(e_{1,0}^3)$};
      \filldraw[fill=green!20!white, draw=green!50!black] (14,2) rectangle (17,3)
      node[midway, black!80!white] {$e_{2,1}$};
      \filldraw[fill=green!20!white, draw=green!50!black] (14,0) rectangle (17,2)
      node[midway, black!80!white] {$e_{0,1}^2$};
      %Stuff under e(2,1) from \Lambda
      \filldraw[fill=orange!30!white, draw=orange!50!black] (17,3) -- (17,7)
      -- (19,6) node[pos=.5, above, sloped, black!80!white]
      {$\A_\Lambda(e_{2,1})$} -- (19,3) -- cycle;
      \filldraw[fill=green!20!white, draw=green!50!black] (17,2) rectangle (19,3)
      node[midway, black!80!white] {$e_{2,1}$};
      \filldraw[fill=green!20!white, draw=green!50!black] (17,0) rectangle (19,2)
      node[midway, black!80!white] {$e_{0,1}^2$};
      %Stuff under e(2,1) from \Gamma
      \filldraw[fill=green!20!white, draw=green!50!black] (19,3) -- (19,6)
      -- (21,5) node[pos=.6, below, sloped, black!80!white, font=\scriptsize]
      {$\A_\Gamma(e_{2,1})$} -- (21,3) -- cycle;
      \filldraw[fill=orange!30!white, draw=orange!50!black] (19,0) rectangle (21,3)
      node[midway, black!80!white] {$e_{1,3}$};
      %Studd under e(1,3)
      \filldraw[fill=orange!30!white, draw=orange!50!black] (21,2) -- (21,5) -- (22,2)
      node[pos=.5, above, sloped, black!80!white] {$\A_\Lambda(e_{1,3})$} -- cycle;
      \filldraw[fill=green!20!white, draw=green!50!black] (21,0) rectangle (22,2)
      node[midway, black!80!white] {$e_{0,1}^2$};
    \end{tikzpicture}
    \caption{The graph on the right shows $\Lambda\cdot\Gamma$ broken up into
      pieces of area from $\Lambda$ and $\Gamma$ along with rectangles added by
      taking the product. Rectangles that were added by taking the product are
      labelled with the edge that necessitated that rectangle. The graphs on
      the left and center show $\Lambda$ and $\Gamma$ for comparison.}
    \label{fig:multIex}
  \end{figure}
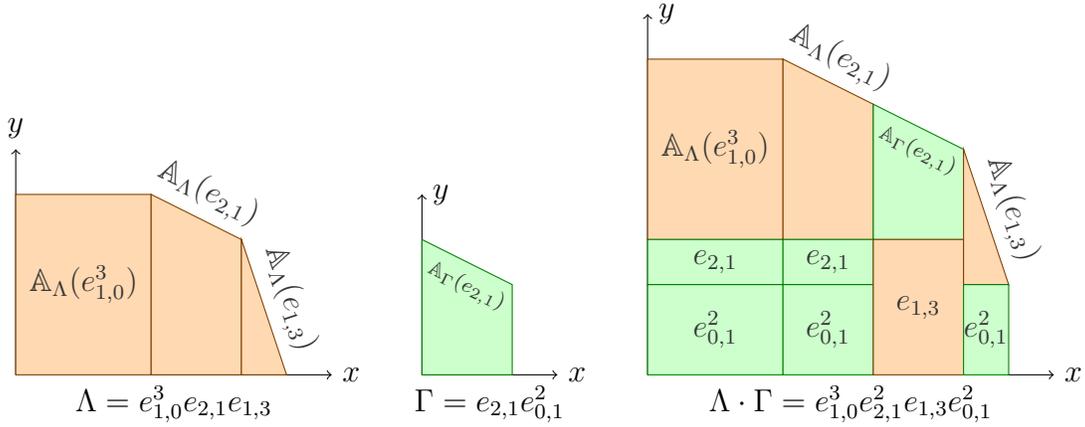

  One important feature of this figure is how we split up the area under the
  edge $e_{2,1}^2$ in $\Lambda\cdot\Gamma$. Because both $\Lambda$ and $\Gamma$
  have an edge of slope $-1/2$, we treat these as separate and compute areas
  underneath them individually, even though they combine to form one edge in
  $\Lambda\cdot\Gamma$. This is important because whichever copy of
  $e_{2,1}$ is on the left (in the figure we've shown it as the one from
  $\Lambda$, but it would not have affected the answer if we'd put the one
  from $\Gamma$ on the left instead) has one rectangle underneath it
  contributed by the other copy of $e_{2,1}$.

  We can now compute $\A(\Lambda\cdot\Gamma)$ by summing up the area
  contributions of each region of $\Lambda\cdot\Gamma$ shown in
  Figure~\ref{fig:multIex}. Let $R$ be the sum of the areas of all the
  rectangles added by taking the product as described above (that is, all
  the rectangles underneath $\Lambda\cdot\Gamma$ in the figure except the
  one labelled $\A_\Lambda(e_{1,0}^3)$). Then,
  \begin{align*}
    \A(\Lambda\cdot\Gamma) &= \A_\Lambda(e_{1,0}^3) + \A_\Lambda(e_{2,1}) + \A_\Gamma(e_{2,1}) + \A_\Lambda(e_{1,3}) + R\\
                        &= \A(\Lambda) + \A(\Gamma) + R.
  \end{align*}
  Plugging back into \eqref{eqn:multIex-I} and applying \eqref{eqn:PickI2}
  then gives
  \begin{align}
  \begin{split}
    \label{eqn:multIex-ans}
    I(\Lambda\cdot\Gamma) &= 2(\A(\Lambda) + \A(\Gamma) + R) + b(\Lambda) + b(\Gamma) - h(\Lambda) - h(\Gamma)\\
                        &= (2\A(\Lambda) + b(\Lambda) - h(\Lambda)) + (2\A(\Gamma) + b(\Gamma) - h(\Gamma)) + 2R\\
                        &= I(\Lambda) + I(\Gamma) + 2R.
  \end{split}
  \end{align}

\end{example}
Equation \eqref{eqn:multIex-ans} is precisely the sort of expression
we want for the index of the product of two convex generators.  By generalizing the above arguments as follows, we obtain a formula for the product of two abitrary generators with no hyperbolic orbit in common, with an explict expression for $R$.

\begin{proposition}
  \label{prop:multIform}
  Let $\Lambda$ and $\Gamma$ be any two convex generators that have no
  hyperbolic orbit in common. Then,
  \[I(\Lambda\cdot\Gamma) = I(\Lambda) + I(\Gamma) + 2\sum_{\nu \in \Edges(\Lambda)}\sum_{\stackrel{\sigma \in \Edges(\Gamma)}{\mu(\sigma) \leq \mu(\nu)}}\nu_x\sigma_y + 2\sum_{\nu \in \Edges(\Gamma)}\sum_{\stackrel{\sigma \in \Edges(\Lambda)}{\mu(\sigma) < \mu(\nu)}}\nu_x\sigma_y.\]
\end{proposition}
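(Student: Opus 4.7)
The plan is to generalize the reasoning displayed in the preceding example by exploiting Pick's formula $I(\Lambda) = 2\A(\Lambda) + b(\Lambda) - h(\Lambda)$, where $b(\Lambda)$ is the number of lattice points on the boundary of $\pl$. This will reduce the claim to an area identity for $\A(\Lambda \cdot \Gamma) - \A(\Lambda) - \A(\Gamma)$.

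My first step would be to check that both $b$ and $h$ are additive under the formal product. The additivity of $h$ is immediate, since the no-common-hyperbolic-orbit hypothesis means the $h$-labelled edges of $\Lambda\cdot\Gamma$ are precisely those of $\Lambda$ together with those of $\Gamma$. For $b$, I would decompose the boundary of $\pl$ into its path, its $x$-axis segment, and its $y$-axis segment to obtain $b(\Lambda) = m(\Lambda) + x(\Lambda) + y(\Lambda)$; each summand is manifestly additive under the formal-product operation. Combining these two observations, the ECH-index formula reduces the proposition to showing that $\A(\Lambda\cdot\Gamma) - \A(\Lambda) - \A(\Gamma)$ equals the sum of the two claimed double sums.

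For the area identity I would decompose $\A(\Lambda\cdot\Gamma) = \sum_\eta \Anu(\Lambda\cdot\Gamma)$ edgewise along the product path. When an edge of $\Lambda$ and an edge of $\Gamma$ share a slope and merge in the product, I would split the merged edge back into its two constituents, adopting the convention that the $\Lambda$-constituent sits first (on the shallower-slope side). With this bookkeeping, every piece of $\Lambda\cdot\Gamma$ is canonically labelled as coming from $\Lambda$ or from $\Gamma$, and the region beneath a $\Lambda$-piece $\nu$ inside the product is precisely the region beneath $\nu$ inside $\Lambda$ translated upward by the total vertical drop of those edges of $\Gamma$ that lie later in the product path. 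This shift contributes a rectangle of width $\nu_x$ and height equal to the total drop, exactly as in the example. Summing rectangle contributions over $\nu \in \Edges(\Lambda)$ and, symmetrically, over $\nu \in \Edges(\Gamma)$ produces the two double sums in the proposition, and the convention of placing $\Lambda$'s pieces first when slopes coincide is what generates the asymmetry between $\mu(\sigma)\leq\mu(\nu)$ in the first sum and $\mu(\sigma)<\mu(\nu)$ in the second.

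The main obstacle I anticipate is the careful treatment of pairs of edges of equal slope: the tie-breaking rule must count each same-slope rectangle exactly once, with no over- or under-counting and no dependence on an arbitrary choice. A clean sanity check is that when $\mu(\nu)=\mu(\sigma)$ one has $\nu_x\sigma_y = \sigma_x\nu_y$, so the rectangle contribution from a same-slope pair is independent of which constituent is placed first along the path, confirming that the two candidate tie-breaking conventions yield the same total. Beyond this subtlety, everything reduces to Pick's formula, the additivities of $b$, $h$, $m$, $x$, and $y$, and the elementary trapezoid-area formula applied piecewise.
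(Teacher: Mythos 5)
Your proposal is correct and follows essentially the same route as the paper, which itself only works the example $\Lambda = e_{1,0}^3e_{2,1}e_{1,3}$, $\Gamma = e_{2,1}e_{0,1}^2$ in detail and then asserts that the general formula follows "by generalizing the above arguments": namely, reduce via $I = 2\A + b - h$ and the additivity of $b$ and $h$ to the area identity $\A(\Lambda\cdot\Gamma) = \A(\Lambda) + \A(\Gamma) + R$, and compute $R$ as a sum of rectangles of width $\nu_x$ and height the vertical drop of the steeper edges of the other factor. Your treatment of the equal-slope tie-breaking (placing the $\Lambda$-constituent first, which produces the $\leq$ versus $<$ asymmetry, and checking via $\nu_x\sigma_y = \sigma_x\nu_y$ that the convention does not affect the total) is exactly the one delicate point, and you handle it correctly.
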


%%%%%%%%%%%

\section{On symplectic embeddings of a polydisk into a ball}
\label{sec:14ext}

Our main goal in this section is to prove Theorem~\ref{thm:14ext}, that for $2 \leq a \leq \frac{\sqrt{7}-1}{\sqrt{7}-2}$ and if $P(a,1)$ symplectically embeds into $B(c)$ then $c \geq 2+ a/2$.
Before proceeding we need some preliminary results.  In Section \ref{sec:119ext-setup} we provide some notation and prove a useful formula for the index of a convex generator via Pick's Lemma.  In Section \ref{sec:repcond}  we prove a necessary result regarding the nature of repeated factors in the Hutchings criterion.

With these results in hand, the plan of attack will be to assume that the statement of Theorem~\ref{thm:14ext}
is false and apply the modified Hutchings criterion, Theorem~\ref{thm:119ext}, to the generator $\Lambda' = e_{1,1}^d$
for a suitable choice of $d$.  By \cite[Lemma
2.1]{Beyond} this is a minimal generator for $B(c)$. This gives us an integer $n$, a convex generator
$\Lambda$, and factorizations $\Lambda' = \Lambda_1'\cdots\Lambda_n'$ and
$\Lambda = \Lambda_1\cdots\Lambda_n$. To obtain a contradiction, we
show that no choice of the $\Lambda_i'$ and $\Lambda_i$ is possible.
We do so in three steps.
\begin{enumerate} 
  \item We prove that for sufficiently large $m$, there is no convex generator
    $\Lambda$ such that $\Lambda \leq_{P(a,1),B(c)} e_{1,1}^m$. 
    If we choose $d$ to be very large, this will imply that we cannot have
    $n = 1$. This step is the content of Proposition~\ref{prop:allbd}, which is
    proved in Section~\ref{sec:allbd}.
  \item We use Proposition~\ref{prop:repcond} to show that there cannot exist
    any $i \neq j$ such that $\Lambda_i' = \Lambda_j'$ and
    $\Lambda_i = \Lambda_j$. In conjunction with Step 1, this will imply that
    the set of all possible values of $n$ is bounded. This step is the content
    of Proposition~\ref{prop:noreps}, which is proved in
    Section~\ref{sec:noreps}.
  \item Using Steps 1 and 2, we show that there is a maximum possible index
    of the product $\prod_{i=1}^n \Lambda_i'$ which does not depend on $d$.
    On the other hand, this product must be equal to $\Lambda' = e_{1,1}^d$.
    Because of Step 1, we will be able to pick $d$ to be arbitrarily large,
    which will make the index of $\Lambda'$ arbitrarily large, resulting in
    a contradiction. This step is contained in the proof of
    Theorem~\ref{thm:14ext}, which is given in Section~\ref{sec:14proof}.
\end{enumerate}

After we have proven Theorem~\ref{thm:14ext}, we will discuss whether
it is possible for an application of the Hutchings criterion to extend the
results of the theorem. This discussion is the content of
Section~\ref{sec:14opt}.

\subsection{A helpful lemma via Pick's theorem}
\label{sec:119ext-setup}
We first fix some notation and then prove a useful formula for the index of a convex generator.
For any convex generator $\Lambda$, let $P_\Lambda$ be the region bounded by $\Lambda$ and the $x$- and $y$-axes. 
We define $\mathbb{A}(\Lambda)$ to be the \emph{area of $P_\Lambda$}.
%, and $\mathbb{A}_\nu(\Lambda)$ to be the \emph{area of the portion of $P_\Lambda$ lying underneath $\nu$.}

\begin{definition}
For any convex generator $\Lambda$, we define
\[b(\Lambda) = x(\Lambda) + y(\Lambda) + m(\Lambda).\]
\end{definition}

Recall that the formal product 1 is the path $\Lambda$ with no edges which starts and ends at $(0,0)$.   Note that $b(\Lambda)$ computes the lattice points on the boundary of any $\Lambda \neq 1$ if and only if $\Lambda$ does not lie entirely on one axis.
\begin{remark}\em
  The operator $b$ is additive under products of convex generators.
  In other words, for any convex generators $\Lambda$ and $\Gamma$, we have
  \begin{align*}
    b(\Lambda \cdot \Gamma) &= x(\Lambda \cdot \Gamma) + y(\Lambda \cdot \Gamma) + m(\Lambda \cdot \Gamma)\\
                          &= x(\Lambda) + x(\Gamma) + y(\Lambda) + y(\Gamma) + m(\Lambda) + m(\Gamma)\\
                          &= b(\Lambda)+b(\Gamma).
  \end{align*}
\end{remark}

Using the above notation, we can now prove a useful formula for the index
of a convex generator.
\begin{lemma}\label{lem:PickI2}
  Let $\Lambda$ be any convex generator. Then,
  \begin{equation}
    \label{eqn:PickI2}
    I(\Lambda) = 2\A(\Lambda) + b(\Lambda) - h(\Lambda).
  \end{equation}
\end{lemma}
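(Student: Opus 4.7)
The plan is to recognize the identity as essentially Pick's theorem applied to the lattice polygon $P_\Lambda$, after matching up the combinatorial quantities. Since $I(\Lambda) = 2(L(\Lambda)-1) - h(\Lambda)$ by definition, the equation to prove reduces, upon cancelling $h(\Lambda)$, to
\[
L(\Lambda) = \A(\Lambda) + \tfrac{1}{2}b(\Lambda) + 1,
\]
and Pick's theorem states precisely that $\A(P) = L^\circ(P) + \tfrac{1}{2}B(P) - 1$ for any lattice polygon $P$, where $L^\circ$ and $B$ denote interior and boundary lattice points respectively. Since $L(\Lambda) = L^\circ(P_\Lambda) + B(P_\Lambda)$, what remains is to identify $b(\Lambda)$ with $B(P_\Lambda)$.

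First I would treat the generic case, in which $\Lambda$ is nontrivial and does not lie entirely on a coordinate axis, so that $P_\Lambda$ is an honest lattice polygon. Its boundary decomposes into three pieces: the path $\Lambda$ itself, the horizontal segment from $(0,0)$ to $(x(\Lambda),0)$, and the vertical segment from $(0,0)$ to $(0,y(\Lambda))$. By the observation in the definition of $m$, the path $\Lambda$ contains $m(\Lambda)+1$ lattice points, while the two axis segments contain $x(\Lambda)+1$ and $y(\Lambda)+1$ lattice points respectively. The three pieces meet pairwise at the corner points $(0,0)$, $(x(\Lambda),0)$, $(0,y(\Lambda))$, each of which is thereby counted twice. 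Inclusion-exclusion then gives
\[
B(P_\Lambda) = (m(\Lambda)+1) + (x(\Lambda)+1) + (y(\Lambda)+1) - 3 = m(\Lambda) + x(\Lambda) + y(\Lambda) = b(\Lambda).
\]
Plugging Pick's formula back yields the claimed identity.

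The only thing to be careful about is the handful of degenerate cases where $P_\Lambda$ is not a two-dimensional polygon: namely when $\Lambda$ is the empty path (so $P_\Lambda$ is a point), or when $\Lambda$ is $e_{1,0}^{x(\Lambda)}$ or $e_{0,1}^{y(\Lambda)}$ (so $P_\Lambda$ collapses onto an axis). In each of these cases Pick's theorem no longer applies directly, so I would verify the formula by hand. For $\Lambda = 1$ both sides are zero. For $\Lambda = e_{1,0}^{x(\Lambda)}$ we have $L(\Lambda) = x(\Lambda)+1$ and $h(\Lambda) = 0$, giving $I(\Lambda) = 2x(\Lambda)$; on the other hand $\A(\Lambda) = 0$ and $b(\Lambda) = x(\Lambda) + 0 + x(\Lambda) = 2x(\Lambda)$, so the formula also gives $2x(\Lambda)$. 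The case of a purely vertical $\Lambda$ is symmetric.

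I do not expect any serious obstacle here; the only non-cosmetic point is the bookkeeping identification $b(\Lambda) = B(P_\Lambda)$, and once that is in place the result is immediate from Pick's theorem modulo the easy degenerate-case check above.
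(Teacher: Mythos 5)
Your proof is correct and follows essentially the same route as the paper: verify the degenerate axis-aligned cases by hand, then apply Pick's theorem to $P_\Lambda$ and identify $b(\Lambda)$ with the number of boundary lattice points of $P_\Lambda$. The only difference is that you spell out the inclusion--exclusion count establishing $b(\Lambda)=B(P_\Lambda)$, which the paper merely asserts in a preceding remark.
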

\begin{proof}
  First, suppose that $\Lambda$ lies entirely on one axis.
  If $\Lambda = e_{1,0}^x$ for some $x \geq 0$, we have
  \[I(\Lambda) = 2x = 2 \cdot 0 + 2x - 0 = 2\A(\Lambda) + b(\Lambda) - h(\Lambda).\]
  The case where $\Lambda = e_{0,1}^y$ for some $y \geq 0$ is analogous.

  Next, suppose that $\Lambda$ does not lie entirely on one axis.
 Since $P_\Lambda$ is the region bounded by $\Lambda$ and the $x$- and $y$-axes,   Pick's Theorem yields 
  \[\A(\Lambda) = i(P_\Lambda)+\frac{b(P_\Lambda)}{2}-1,\]
  where $i(P_\Lambda)$ is the number of lattice points in the interior of $P_\Lambda$ and $b(P_\Lambda)$
  is the number of lattice points on the boundary of $P_\Lambda$. 
  Rearranging and noting that $L(\Lambda) = i(P_\Lambda)+b(P_\Lambda)$, we obtain
  \[L(\Lambda) = i(P_\Lambda) + b(P_\Lambda) = \A(\Lambda)+\frac{b(P_\Lambda)}{2}+1 = \A(\Lambda)+\frac{b(\Lambda)}{2}+1,\]
  where the last equality follows from the fact that $\Lambda$ does not lie
  entirely on one axis. We can then use this expression for $L(\Lambda)$ to
  compute $I(\Lambda)$:
  \begin{align}
    \begin{split}
      \label{eqn:PickI}
      I(\Lambda) = 2(L(\Lambda) - 1) - h(\Lambda) %&= 2\left(\A(\Lambda)+\frac{b(\Lambda)}{2}+1-1\right) - h(\Lambda)\\
                                                  &= 2\A(\Lambda) + b(\Lambda) - h(\Lambda).
    \end{split}
\end{align}
\end{proof}

%%%%%%%%%%%%%%%%%%%%%
\subsection{Repeated factors in the Hutchings criterion}
\label{sec:repcond}
We will prove the following proposition.
%Section \ref{sec:repcond} investigates the implications of repeated factors in the Hutchings criterion.
\begin{proposition}
  \label{prop:repcond}
  Let $\Lambda$ and $\Lambda'$ be nontrivial convex generators with no edges
  labelled $h$. Suppose that $\Lambda$ and $\Lambda'$ satisfy (1) and (3) of
  Definition~\ref{cobordismineq} and that $I(\Lambda\cdot\Lambda) = I(\Lambda'\cdot\Lambda')$.
  Then,
  \[8\A(\Lambda') \leq (b(\Lambda')-1)^2.\]
  Moreover, $\Lambda$ must be of the form $e_{x,y}$, where $x, y \in \Z_{>0}$
  are coprime and satisfy
  \[xy = 2\A(\Lambda')\]
  and
  \[x+y = b(\Lambda') - 1.\]
  Equivalently, $x$ and $y$ must be nonnegative coprime integers such that
  \begin{equation}
    \label{eqn:repcond-xyset}
    \{x,y\} = \left\{\frac{b(\Lambda')-1\pm\sqrt{(b(\Lambda')-1)^2-8\A(\Lambda')}}{2}\right\}.
  \end{equation}
\end{proposition}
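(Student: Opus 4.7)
The plan is to leverage Lemma~\ref{lem:PickI2} to convert the two index hypotheses (for $\Lambda$ and for $\Lambda\cdot\Lambda$) into exact equalities of the invariants $\A$ and $b$, and then feed these into condition (3) to force $\Lambda$ to be a single primitive edge.

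Since neither $\Lambda$ nor $\Lambda'$ has $h$-edges, Lemma~\ref{lem:PickI2} rewrites $I(\Lambda) = 2\A(\Lambda) + b(\Lambda)$, and analogously for $\Lambda'$, so condition (1) becomes
\[2\A(\Lambda) + b(\Lambda) = 2\A(\Lambda') + b(\Lambda').\]
For the doubled generator, the region $P_{\Lambda\cdot\Lambda}$ is the rescaling of $P_\Lambda$ by the factor $2$, so $\A(\Lambda\cdot\Lambda) = 4\A(\Lambda)$; additivity of $b$ gives $b(\Lambda\cdot\Lambda) = 2b(\Lambda)$; and $h(\Lambda\cdot\Lambda) = 0$. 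Lemma~\ref{lem:PickI2} then yields $I(\Lambda\cdot\Lambda) = 8\A(\Lambda) + 2b(\Lambda)$ and similarly for $\Lambda'$, so the hypothesis $I(\Lambda\cdot\Lambda) = I(\Lambda'\cdot\Lambda')$ is equivalent to
\[4\A(\Lambda) + b(\Lambda) = 4\A(\Lambda') + b(\Lambda').\]
Subtracting the first display from the second, and then plugging back in, gives $\A(\Lambda) = \A(\Lambda')$ and $b(\Lambda) = b(\Lambda')$.

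Next I would substitute these equalities into (3). Because $h(\Lambda) = 0$ and $b(\Lambda) = b(\Lambda')$ is precisely $x(\Lambda) + y(\Lambda) + m(\Lambda) = x(\Lambda') + y(\Lambda') + m(\Lambda')$, condition (3) collapses to $m(\Lambda) \leq 1$. Nontriviality then forces $m(\Lambda) = 1$, so $\Lambda$ is a single edge $e_{x,y}$ with $\gcd(x,y) = 1$ (after discarding the degenerate on-axis cases). For such $\Lambda$, $\A(\Lambda) = xy/2$ and $b(\Lambda) = x + y + 1$, so the equalities $\A(\Lambda) = \A(\Lambda')$ and $b(\Lambda) = b(\Lambda')$ read $xy = 2\A(\Lambda')$ and $x + y = b(\Lambda') - 1$. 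Vieta then identifies $x$ and $y$ as the two roots of
\[t^2 - (b(\Lambda')-1)\,t + 2\A(\Lambda') = 0,\]
whose reality forces $(b(\Lambda')-1)^2 \geq 8\A(\Lambda')$, and the quadratic formula produces \eqref{eqn:repcond-xyset}.

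I do not anticipate a serious obstacle. The one mild subtlety is that condition (3) is an asymmetric inequality and cannot by itself pin down $m(\Lambda)$; it is the \emph{pairing} of the index identities for $\Lambda$ and for $\Lambda\cdot\Lambda$, which together determine $\A$ and $b$ separately, that collapses (3) into the clean bound $m(\Lambda) \leq 1$. After this reduction the rest is Pick's theorem and Vieta.
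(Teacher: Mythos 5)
Your proposal is correct and follows essentially the same route as the paper: Pick's formula $I = 2\A + b$ together with $\A(\Lambda\cdot\Lambda) = 4\A(\Lambda)$ and additivity of $b$ yields $\A(\Lambda)=\A(\Lambda')$ and $b(\Lambda)=b(\Lambda')$, after which condition (3) forces $m(\Lambda)=1$ and Vieta finishes the argument. The only cosmetic difference is that you extract $\A(\Lambda)=\A(\Lambda')$ and $b(\Lambda)=b(\Lambda')$ by subtracting the two linear relations at once, while the paper first isolates $\A(\Lambda)=\A(\Lambda')$ via the identity $I(\Lambda\cdot\Lambda)=4\A(\Lambda)+2I(\Lambda)$; the content is identical.
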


\begin{proof}
  We will make repeated use of \eqref{eqn:PickI2}, which is the content of
  Lemma~\ref{lem:PickI2}. Using \eqref{eqn:PickI2} along with the additivity of $b$, we get
  \begin{equation}
  \label{eqn:repeat}
\begin{array}{rc l}
    I(\Lambda\cdot\Lambda) &= &2\A(\Lambda\cdot\Lambda) + b(\Lambda\cdot\Lambda) - h(\Lambda\cdot\Lambda)\\
                           &= & 2\A(\Lambda\cdot\Lambda) + 2b(\Lambda).\\
  \end{array}
  \end{equation}
  Recall that $\pl$ denotes the region bounded by $\Lambda$ and the $x$- and
  $y$-axes. Then, the region bounded by $\Lambda\cdot\Lambda$ and
  the $x$- and $y$-axes is $\pl$ dilated by a factor of 2, which has 4
  times the area of $\pl$, i.e.
  \begin{equation}\label{eqn:dumbA}
   \A(\Lambda\cdot\Lambda) = 4\A(\Lambda).
 \end{equation}
 
  Substituting \eqref{eqn:dumbA} into \eqref{eqn:repeat} and using \eqref{eqn:PickI2} again yields
\[
  \begin{array}{rc l}
    I(\Lambda\cdot\Lambda) &= & 8\A(\Lambda) + 2b(\Lambda)\\
                           &= & 4\A(\Lambda) + 2(2\A(\Lambda) + b(\Lambda)) \\ 
                           &=& 4\A(\Lambda) + 2I(\Lambda),\\
  \end{array}
  \]
Likewise for $\Lambda'$ we obtain
  \[I(\Lambda'\cdot\Lambda') = 4\A(\Lambda') + 2I(\Lambda').\]
We assumed that $I(\Lambda\cdot\Lambda) = I(\Lambda'\cdot\Lambda')$, thus,
  \[4\A(\Lambda)+2I(\Lambda) = 4\A(\Lambda')+2I(\Lambda').\]
  Since $I(\Lambda) = I(\Lambda')$, we have
  \begin{equation}
    \label{eqn:repcond-A}
    \A(\Lambda) = \A(\Lambda').
  \end{equation}

  Now, because of \eqref{eqn:PickI2}, we have
  \[I(\Lambda) = 2\A(\Lambda)+b(\Lambda) = I(\Lambda') = 2\A(\Lambda') + b(\Lambda').\] 
  Combining this equation with \eqref{eqn:repcond-A} gives
  \begin{equation}
    \label{eqn:repcond-m1}
    b(\Lambda) = x(\Lambda) + y(\Lambda) + m(\Lambda) = b(\Lambda').
  \end{equation}
  On the other hand, the fact that $\Lambda \leq_{\Omega,\Omega'} \Lambda'$ implies
  that
  \begin{equation}
    \label{eqn:repcond-m2}
    x(\Lambda)+y(\Lambda) \geq b(\Lambda')-1.
  \end{equation}
  Since $m(\Lambda) > 0$, the only way that \eqref{eqn:repcond-m1} and
  \eqref{eqn:repcond-m2} can simultaneously be true is if \eqref{eqn:repcond-m2}
  is an equality and we have $m(\Lambda) = 1$. So, $\Lambda$ must have the form
  $e_{x,y}$, where $\gcd(x,y) = 1$. This allows us to compute properties
  of $\Lambda$ explicitly, so that \eqref{eqn:repcond-m2} becomes
  \begin{equation}
    \label{eqn:repcond-xy1}
    x(\Lambda) + y(\Lambda) = x + y = b(\Lambda') - 1,
  \end{equation}
  and \eqref{eqn:repcond-A} becomes
  \[\A(\Lambda) = \frac{xy}{2} = \A(\Lambda'),\]
  or equivalently
  \begin{equation}
    \label{eqn:repcond-xy2}
    xy = 2\A(\Lambda').
  \end{equation}

  Using \eqref{eqn:repcond-xy1} and \eqref{eqn:repcond-xy2} to solve for
  $x$ and $y$ yields \eqref{eqn:repcond-xyset}.
  %Multiplying \eqref{eqn:repcond-xy1} by $x$ and substituting in
  %\eqref{eqn:repcond-xy2} gives
  %\[x^2+xy = x^2 + 2\A(\Lambda') = x(b(\Lambda') - 1).\]
  Finally, we note that since $x$ and $y$ are real, the square roots in
  \eqref{eqn:repcond-xyset} must be real.
  %i.e.\
  %\[8\A(\Lambda') \leq (b(\Lambda')-1)^2.\]
\end{proof}

\begin{remark}\label{whyprophelps} \em
There are a few interesting interactions between the conditions of
Theorem~\ref{thm:119ext} and Proposition~\ref{prop:repcond}.
For instance, Proposition~\ref{prop:repcond} allows us to
rewrite (ii) of Theorem~\ref{thm:119ext} as:
\begin{enumerate}[label=(\roman*), start=2]
  \item For all $i \neq j$, if $\Lambda_i$ and $\Lambda_j$ have any
    elliptic orbit $e_{x,y}$ in common, then $\Lambda_i = \Lambda_j = e_{x,y}$.
\end{enumerate}
In addition, by arguing as in the proof of Theorem~\ref{thm:14ext}, one
can sometimes use (i) of Theorem~\ref{thm:119ext} along with
Proposition~\ref{prop:repcond} to prove that the set of possible
values of $I(\Lambda')$ is bounded. This type
of argument appears in Section~\ref{sec:14proof}.
%the beginning of Section~\ref{sec:14ext} as well
%as Section~\ref{sec:14proof}.
\end{remark}

\subsection{Elimination of sufficiently large convex generators}
\label{sec:allbd}
We first prove some useful inequalities on the $x$ and $y$ endpoints of certain
convex generators.
\begin{lemma}
  \label{lem:xy-uppers}
  Let $a > 1$ and $c < 2+a/2$, and suppose $d$ and $\Lambda$ are such that
  $\Lambda \leq_{P(a,1),B(c)} e_{1,1}^d$. Then
  \begin{equation}
    \label{eqn:xupper}
    x(\Lambda) < \left(2+\frac{a}{2}\right)d-ay(\Lambda),
  \end{equation}
  and
  \begin{equation}
    \label{eqn:yupper}
    y(\Lambda) < \frac{d(a-2)+2}{2(a-1)}.
  \end{equation}
\end{lemma}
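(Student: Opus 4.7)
The plan is to read off both inequalities directly from the two nontrivial conditions in Definition~\ref{cobordismineq} applied with $\Lambda' = e_{1,1}^d$. I will first compute the relevant data for the reference generator and then manipulate the resulting linear constraints.

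First I would tabulate the invariants of $e_{1,1}^d$: we have $x(e_{1,1}^d) = y(e_{1,1}^d) = m(e_{1,1}^d) = d$ and $h(e_{1,1}^d) = 0$. From Example~\ref{polyaction} applied to $P(a,1)$ I get $A_{P(a,1)}(\Lambda) = x(\Lambda) + a\,y(\Lambda)$, while the ellipsoid formula applied to $B(c)=E(c,c)$ gives $A_{B(c)}(e_{1,1}^d) = cd$ (the line $cx+cy=cd$ is tangent to the edge of slope $-1$ from $(0,d)$ to $(d,0)$). This reduces condition (2) of Definition~\ref{cobordismineq} to
\[
x(\Lambda) + a\,y(\Lambda) \le cd.
\]

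For inequality \eqref{eqn:xupper}, I would simply invoke the hypothesis $c < 2 + a/2$ in the above action bound: $x(\Lambda) \le cd - a\,y(\Lambda) < (2 + a/2)d - a\,y(\Lambda)$, which is exactly \eqref{eqn:xupper}. For inequality \eqref{eqn:yupper}, I would next write out condition (3) of Definition~\ref{cobordismineq}:
\[
x(\Lambda) + y(\Lambda) - \tfrac{h(\Lambda)}{2} \ge x(e_{1,1}^d) + y(e_{1,1}^d) + m(e_{1,1}^d) - 1 = 3d - 1.
\]
Since $h(\Lambda) \ge 0$, this yields $x(\Lambda) \ge 3d - 1 - y(\Lambda)$. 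Adding $a\,y(\Lambda)$ to both sides and combining with the action bound gives
\[
3d - 1 + (a-1)\,y(\Lambda) \le x(\Lambda) + a\,y(\Lambda) \le cd < \left(2 + \tfrac{a}{2}\right)d.
\]
Solving for $y(\Lambda)$ (using $a > 1$ so that $a-1 > 0$) produces $y(\Lambda) < \bigl((a-2)d/2 + 1\bigr)/(a-1)$, which is precisely \eqref{eqn:yupper}.

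There is no real obstacle here: the lemma is essentially just a rewriting of conditions (2) and (3) after inserting the values of the invariants of $e_{1,1}^d$ and the symplectic actions with respect to $P(a,1)$ and $B(c)$. The only point that deserves a word of care is that one discards the $h(\Lambda)/2$ term by the nonnegativity of $h(\Lambda)$; this is harmless since the lemma only asserts strict inequalities that will be used later as loose upper bounds on $x(\Lambda)$ and $y(\Lambda)$.
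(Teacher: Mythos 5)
Your proof is correct and follows essentially the same route as the paper: reduce condition (2) of Definition~\ref{cobordismineq} to $x(\Lambda)+a\,y(\Lambda)\le cd<(2+a/2)d$, combine with condition (3) in the form $x(\Lambda)+y(\Lambda)\ge 3d-1$, and solve for $x(\Lambda)$ and $y(\Lambda)$ respectively. Your explicit remark about discarding the $-h(\Lambda)/2$ term via $h(\Lambda)\ge 0$ is a small clarification the paper leaves implicit, but the argument is identical.
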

\begin{proof}
By Example \ref{polyaction}, we have
\[
A_{P(a,1)}(\Lambda) =  x(\Lambda) + ay(\Lambda) .
\]
  Our assumptions tell us
  \begin{equation}
    \label{eqn:xyup-acond}
  x(\Lambda) + ay(\Lambda) =A_{P(a,1)}(\Lambda)  \leq A_{B(c)}(e_{1,1}^d) = cd < \left(2+\frac{a}{2}\right)d
  \end{equation}
  and %that
  \begin{equation}
    \label{eqn:xyup-hcond}
    x(\Lambda)+y(\Lambda) \geq x(e_{1,1}^d) + y(e_{1,1}^d) + m(e_{1,1}^d) - 1 = 3d-1.
  \end{equation}
  We  solve for $x(\Lambda)$ in \eqref{eqn:xyup-acond} obtaining% to get,
  \[x(\Lambda) < \left(2+\frac{a}{2}\right)d-ay(\Lambda).\]
Combining \eqref{eqn:xyup-acond} and \eqref{eqn:xyup-hcond} gives
  \[3d-1+(a-1)y(\Lambda) \leq x(\Lambda)+ay(\Lambda) < \left(2+\frac{a}{2}\right)d.\]
  Solving for $y(\Lambda)$ shows
  \[y(\Lambda) < \frac{d\left(\frac{a}{2}-1\right)+1}{a-1} = \frac{d(a-2)+2}{2(a-1)}.\]
\end{proof}

We now use the above lemma to eliminate sufficiently large convex generators
from consideration in the proof of Theorem~\ref{thm:14ext}.
\begin{proposition}
  \label{prop:allbd}
  Let
  \[2 \leq a < \frac{\sqrt{7}-1}{\sqrt{7}-2},\]
  and suppose that $c < 2+a/2$. Then, there exists some $d_a \geq 1$ such that,
  for any $d > d_a$ and any convex generator $\Lambda$, we have
  $\Lambda \not\leq_{P(a,1),B(c)} e_{1,1}^d$.
\end{proposition}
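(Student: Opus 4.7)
The plan is to use Lemma~\ref{lem:xy-uppers} to bound $x(\Lambda)$ and $y(\Lambda)$ linearly in $d$, and then to combine these bounds with Pick's formula from Lemma~\ref{lem:PickI2} to produce an upper bound on $I(\Lambda)$ of the form $2c_a d^2 + O(d)$ with $c_a < 1/2$. Condition~(1) of Definition~\ref{cobordismineq} forces $I(\Lambda) = I(e_{1,1}^d) = d^2 + 3d$, so once $d$ is large enough to dominate the lower-order terms this will be a contradiction.

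Set $Y_d := (d(a-2)+2)/(2(a-1))$; Lemma~\ref{lem:xy-uppers} gives $y(\Lambda) < Y_d$ and $x(\Lambda) < (2+a/2)d - ay(\Lambda)$. Since $P_\Lambda$ is contained in the rectangle $[0,x(\Lambda)] \times [0,y(\Lambda)]$, one has $\A(\Lambda) \leq x(\Lambda)\,y(\Lambda)$. Substituting the bound on $x$ and maximizing $g(y) = ((2+\tfrac{a}{2})d - ay)\,y$ over $y \in [0, Y_d]$ yields
\[\A(\Lambda) \;\leq\; c_a\, d^2 + O(d), \qquad c_a := \frac{(a-2)(5a-4)}{4(a-1)^2}.\]
The unconstrained optimum $y^\ast = (4+a)d/(4a)$ exceeds $Y_d$ precisely when $a^2 - 7a + 4 < 0$, which holds throughout $2 \leq a < (\sqrt{7}-1)/(\sqrt{7}-2)$, so the constrained maximum is attained at $y = Y_d$ and produces the stated leading coefficient. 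For the linear terms, every edge $e_{p,q}^k$ or $h_{p,q}$ has multiplicity at most the $\ell^1$-norm of its displacement vector (coprime entries satisfy $p+q \geq 1$), hence $m(\Lambda) \leq x(\Lambda) + y(\Lambda)$, and combining with $x(\Lambda) + y(\Lambda) \leq x(\Lambda) + a y(\Lambda) \leq cd$ gives $b(\Lambda) = x(\Lambda) + y(\Lambda) + m(\Lambda) \leq 2cd = O(d)$.

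Applying Lemma~\ref{lem:PickI2} and using $h(\Lambda) \geq 0$ then shows
\[I(\Lambda) \;=\; 2\A(\Lambda) + b(\Lambda) - h(\Lambda) \;\leq\; 2c_a\, d^2 + O(d),\]
which must equal $d^2 + 3d$. The inequality $2c_a < 1$ is equivalent to $3a^2 - 10a + 6 < 0$, whose upper root is $(5+\sqrt{7})/3$; after rationalizing one checks $(5+\sqrt{7})/3 = (\sqrt{7}-1)/(\sqrt{7}-2)$, so the range of $a$ in the hypothesis is exactly the range on which $2c_a < 1$. Choosing $d_a$ large enough that $(1 - 2c_a)\,d^2$ dominates the $O(d)$ slack yields the desired contradiction for every $d > d_a$.

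The delicate step is ensuring that the crude bound $\A(\Lambda) \leq x(\Lambda)\,y(\Lambda)$ is tight enough that the critical threshold emerges cleanly from equating $2c_a$ with the leading coefficient $1$ of $I(e_{1,1}^d)$; the arithmetic identity $(5+\sqrt{7})/3 = (\sqrt{7}-1)/(\sqrt{7}-2)$ is precisely what converts the combinatorial optimization into the hypothesis on $a$. The degenerate case in which $\Lambda$ lies on a single axis is handled separately: there $\A(\Lambda) = 0$ forces $m(\Lambda)$ and hence $x(\Lambda)$ (or $y(\Lambda)$) to be of order $d^2$, contradicting $x(\Lambda) + a\, y(\Lambda) \leq cd$ for large $d$.
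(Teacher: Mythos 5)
Your proposal is correct and follows essentially the same route as the paper: bound $\Lambda$ by its bounding rectangle, substitute the action bound on $x(\Lambda)$ and then the bound on $y(\Lambda)$ (after checking the optimum lies past $Y_d$, which is exactly the paper's sign check on $a^2-7a+4$), and observe that the resulting leading coefficient drops below that of $I(e_{1,1}^d)=d^2+3d$ precisely when $3a^2-10a+6<0$, whose upper root is $(5+\sqrt{7})/3=(\sqrt{7}-1)/(\sqrt{7}-2)$. The only cosmetic difference is that the paper bounds the lattice count $L(\Lambda)\leq (x+1)(y+1)$ directly, whereas you bound $\A(\Lambda)\leq xy$ and absorb $b(\Lambda)$ into the $O(d)$ error via Pick's formula; these are the same estimate.
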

\begin{proof}
  Fix some $d$, and suppose there exists $\Lambda \leq e_{1,1}^d$. Let
  $x = x(\Lambda)$ and $y = y(\Lambda)$. Because $\Lambda$ is convex, it lies
  inside the rectangle $[0,x] \times [0,y]$. Thus, the maximum possible value
  of $L(\Lambda)$ occurs when $\Lambda$ contains all the lattice points inside
  this rectangle, and the largest $I(\Lambda)$ could be is when $\Lambda$
  contains all these lattice points and has no edges labelled `$h$.' Noting
  also that $I(\Lambda) = I(e_{1,1}^d) = d(d+3)$, we see that
  \[2((x+1)(y+1)-1) = 2(x+1)(y+1)-2 \geq I(\Lambda) = d(d+3),\]
  or equivalently,
  \[0 \geq d(d+3)+2-2(x+1)(y+1).\]
  The substitution of \eqref{eqn:xupper} into this equation yields
%  \begin{align*}
 %   0 &> d(d+3)+2-2\left(\left(2+\frac{a}{2}\right)d-ay+1\right)(y+1);\\
%    0  &> d(d+3)+2-(4+a)dy+2ay^2-2y-(4+a)d+2ay-2.\\
 % \end{align*}
%  Collecting powers of $y$ yields,
  \begin{equation}
    \label{eqn:allbd-I1}
    0 > 2ay^2 -y((4+a)d + 2 - 2a) + d(d+3) - (4+a)d.
  \end{equation}

We now wish to substitute \eqref{eqn:yupper} into \eqref{eqn:allbd-I1}, while still maintaining a valid inequality.  This is permissible provided the right hand side of \eqref{eqn:allbd-I1} is nonincreasing with respect to increasing $y$.
Notice that the derivative of the right hand side of \eqref{eqn:allbd-I1} with respect to $y$ is
  \[4ay-(4+a)d-2+2a.\]
  Substituting \eqref{eqn:yupper} into this expression gives us
  \begin{align}
    \label{eqn:allbd-derivy}
    \begin{split}
    4ay-(4+a)d-2+2a % &<  %\frac{d(a^2-7a+4) + 2(a^2+1)}{a-1}.
  %  \frac{2ad(a-2)+4a}{a-1}-(4+a)d-2+2a;\\
                  %  &= \frac{2a^2d-4ad+4a-4ad+4d-a^2d+ad-2a+2+2a^2-2a}{a-1}\\
                    &< \frac{d(a^2-7a+4) + 2(a^2+1)}{a-1}.
    \end{split}
  \end{align}
  Now, $a^2 - 7a + 4$ has roots at $a = \frac{7\pm\sqrt{33}}{2} \approx 0.628,\ 6.372$.
  Since $a$ is in between these two roots, we have $a^2 - 7a + 4 < 0$.
  So, the expression in \eqref{eqn:allbd-derivy} will be negative for all $d$
  above some sufficiently large value $d_1$. In this case, we can substitute
  \eqref{eqn:yupper} into the right hand side of \eqref{eqn:allbd-I1} and multiply by $2(a-1)^2$ to obtain
 % \[0 > \frac{a(d(a-2)+2)^2}{2(a-1)^2} - \frac{d(a-2)+2}{2(a-1)}[(4+a)d-2(a-1)] + d(d+3) - (4+a)d.\]
 %  and collecting powers of $d$ yields,
 % \begin{multline*}
 %   0 > d^2\left[\frac{a(a-2)^2}{a-1} - (a-2)(4+a) + 2(a-1)\right]+\\
  %  d\left[\frac{4a(a-2)}{a-1} - 2(4+a) + 2(a-2)(a-1) + 6(a-1) - 2(4+a)(a-1)\right]+\\
 %   \left[\frac{4a}{a-1} + 4(a-1)\right].\\
 % \end{multline*}
 % Simplifying this gives us,
 % \begin{align*}
  %  0 &> \left(\frac{-3a^2+10a-6}{a-1}\right)d^2+\left(\frac{-4a^2-2a+2}{a-1}\right)d+\frac{4(a^2-a+1)}{a-1}. %d^2\left(\frac{a(a-2)^2}{a-1}-a^2+6\right) + d\left(\frac{4a(a-2)}{a-1} - 8a - 2\right) + \frac{4(a+(a-1)^2)}{a-1}\\
   %   &= \left(\frac{a(a-2)^2-a^3+a^2+6a-6}{a-1}\right)d^2 + \left(\frac{4a(a-2)-8a^2+8a-2a+2}{a-1}\right)d + \frac{4(a^2-a+1)}{a-1}\\
      %&= \left(\frac{-3a^2+10a-6}{a-1}\right)d^2+\left(\frac{-4a^2-2a+2}{a-1}\right)d+\frac{4(a^2-a+1)}{a-1}.
%  \end{align*}
%  Multiplying by $(a-1)$ produces,
  \begin{equation}
    \label{eqn:allbd-dquad}
    0 > (-3a^2+10a-6)d^2 - 2(2a^2+a-1)d + 4(a^2-a+1).
  \end{equation}
  The coefficient of $d^2$ in \eqref{eqn:allbd-dquad} is negative
  for sufficiently large $a$ and has roots at
  $a = \frac{5\pm\sqrt{7}}{3} \approx 0.7848,\ 2.5486$.  Note that $ \frac{5+\sqrt{7}}{3} = \frac{\sqrt{7}-1}{\sqrt{7}-2}.$ Because our value
  of $a$ is between these two roots, we can conclude that the coefficient
  of $d^2$ is positive. Thus, if $d$ is larger than some sufficiently large value
  $d_2$, the right hand side of $\eqref{eqn:allbd-dquad}$ will be positive, a
  contradiction.
  
  We have shown that if $d > d_1$ and $d > d_2$, then the existence of
  $\Lambda$ results in a contradiction. Since $d_1$ and $d_2$ depend only on
  $a$ by construction, setting $d_a = \max\{d_1,d_2\}$ now yields the desired
  statement.
\end{proof}

\subsection{Elimination of repeated factors of convex generators}
\label{sec:noreps}
\begin{proposition}
  \label{prop:noreps}
  Let $2 \leq a \leq 3$, $c < 2+a/2$, and $d \geq 1$. Then, for any
  convex generator $\Lambda$, at least one of the following holds:
 \begin{enumerate}[label={\em(\roman*)}]
\item $\Lambda \not\leq_{P(a,1),B(c)} e_{1,1}^d$.
\item $I(\Lambda\cdot\Lambda) \neq I(e_{1,1}^{2d})$.
\end{enumerate}
\end{proposition}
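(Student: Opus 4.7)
The strategy is to assume both (i) and (ii) fail---so that $\Lambda \leq_{P(a,1),B(c)} e_{1,1}^d$ and $I(\Lambda \cdot \Lambda) = I(e_{1,1}^{2d})$---and derive a contradiction. The product $\Lambda \cdot \Lambda$ being defined forces $\Lambda$ to have no edges labelled $h$, and condition (1) of Definition~\ref{cobordismineq} forces $I(\Lambda) = I(e_{1,1}^d) > 0$, so $\Lambda$ is nontrivial. The hypotheses of Proposition~\ref{prop:repcond} therefore hold with $\Lambda' = e_{1,1}^d$, and since $b(e_{1,1}^d) = 3d$ and $\A(e_{1,1}^d) = d^2/2$, that proposition pins down $\Lambda = e_{x,y}$ for coprime positive integers $x, y$ satisfying $x + y = 3d-1$ and $xy = d^2$. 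In particular $y(\Lambda) \geq r_-$, where $r_\pm := \bigl(3d - 1 \pm \sqrt{5d^2 - 6d + 1}\bigr)/2$ are the two roots of $t^2 - (3d-1)t + d^2 = 0$.

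Next we combine this lower bound on $y(\Lambda)$ with the strict upper bound from Lemma~\ref{lem:xy-uppers},
\[
y(\Lambda) \;<\; \frac{d(a-2)+2}{2(a-1)}.
\]
The entire proof then reduces to showing the opposite inequality $r_- \geq (d(a-2)+2)/(2(a-1))$ for every $a \in [2,3]$ and $d \geq 1$. After clearing denominators (using $a > 1$) and isolating the square root, this becomes $(2a-1)d - (a+1) \geq (a-1)\sqrt{5d^2 - 6d + 1}$. The left-hand side is non-negative on our parameter range---with equality only at $(a,d) = (2,1)$---so squaring both sides reduces the claim to the polynomial inequality
\[
Q(a,d) \;:=\; (-a^2 + 6a - 4)d^2 + 2(a^2 - 7a + 4)d + 4a \;\geq\; 0.
\]

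The hard part will be verifying $Q(a,d) \geq 0$ on the whole range; the cleanest route is via the factorization
\[
Q(a,d) = (d-2)\bigl[(-a^2 + 6a - 4)d - 2a\bigr],
\]
which one discovers by noticing that $d = 2$ makes both sides of the square-root inequality equal to $3(a-1)$, forcing $(d-2) \mid Q(a,d)$. On $[2,3]$ one has $-a^2 + 6a - 4 \in [4,5]$, so $2a/(-a^2+6a-4) \leq 6/5$; hence for $d \geq 2$ both factors of $Q$ are non-negative, while for $d = 1$ the product reduces to $(a-2)^2 \geq 0$. This gives $y(\Lambda) \geq r_- \geq (d(a-2)+2)/(2(a-1))$, which contradicts the strict upper bound and completes the argument.
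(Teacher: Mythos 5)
Your proof is correct, and it shares the paper's skeleton---negate both alternatives, feed $\Lambda' = e_{1,1}^d$ into Proposition~\ref{prop:repcond} to force $\Lambda = e_{x,y}$ with $x+y = 3d-1$ and $xy = d^2$, then contradict the action bound---but the endgame is organized differently. The paper substitutes the explicit root $x = \tfrac{1}{2}\bigl(3d-1\pm\sqrt{5d^2-6d+1}\bigr)$ into $x^2 + ad^2 < (2+\tfrac{a}{2})xd$, factors the result as $\bigl(-d-1\pm\sqrt{\cdot}\bigr)\bigl((3-a)d-1\pm\sqrt{\cdot}\bigr) < 0$, and runs two separate sign analyses (one per choice of $\pm$), tracking the zeros of each factor, the behavior as $d\to\infty$, and the leftover case $d=1$. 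You collapse both sign choices into the single bound $y(\Lambda) \geq r_-$, play it against the upper bound \eqref{eqn:yupper} from Lemma~\ref{lem:xy-uppers}, and reduce everything to $Q(a,d) = (d-2)\bigl[(-a^2+6a-4)d-2a\bigr] \geq 0$, which is immediate for integer $d\geq 1$. The inputs are the same---\eqref{eqn:yupper} is derived from exactly the action inequality together with $x+y \geq 3d-1$, which holds with equality here, and your factors match the paper's (the root $d = 2a/(-a^2+6a-4)$ appears in both)---so this is a streamlining rather than a new idea; what it buys is eliminating the case split on the sign of the square root. One point worth making explicit in a write-up: $Q(a,d)$ is genuinely negative for real $d$ strictly between $2a/(-a^2+6a-4)\in[1,6/5]$ and $2$, so the integrality of $d$ does real work in your two-case check ($d=1$ versus $d\geq 2$), just as the isolated case $d=1$ does in the paper's argument.
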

\begin{proof}
  To obtain a contradiction, suppose that there exists some $\Lambda$ such
  that $\Lambda \leq_{P(a,1),B(c)} e_{1,1}^d$ and
  $I(\Lambda\cdot\Lambda) = I(e_{1,1}^{2d})$. Then, we can apply
  Proposition~\ref{prop:repcond} with $\Lambda' = e_{1,1}^d$. Noting that
  $A(\Lambda') = d^2/2$ and $b(\Lambda') = 3d$, we get
  $\Lambda = e_{x,y}$, where
  \begin{equation}\label{xstuff}
  x = \frac{3d-1 \pm \sqrt{5d^2-6d+1}}{2}
  \end{equation}
  and
  \begin{equation}\label{ystuff}
y = \frac{d^2}{x}.
    \end{equation}

  On the other hand, $\Lambda \leq_{P(a,1),B(c)} e_{1,1}^d$ implies that
  \[  x+ay = x(\Lambda) + ay(\Lambda) = A_{P(a,1)}(\Lambda)   \leq A_{B(c)}(e_{1,1}^d) = cd < \left(2+\frac{a}{2}\right)d.\]
  Substituting in our expression  \eqref{ystuff} for $y$ and multiplying by $x$  gives
  \[x^2+ad^2 < \left(2+\frac{a}{2}\right)xd.\]
  We then substitute in our expression  \eqref{xstuff} for $x$ and multiply by 4 to get
  \begin{multline*}
    (3d-1)^2 \pm (6d-2)\sqrt{5d^2-6d+1}+5d^2-6d+1+4ad^2 <\\ (4+a)d\left(3d-1\pm\sqrt{5d^2-6d+1}\right),
  \end{multline*}
  or equivalently
  %\[(14+4a)d^2-12d \pm (6d-2)\sqrt{5d^2-6d+1} < (12+3a)d^2-(4+a)d \pm (4+a)\sqrt{5d^2-6d+1}.\]
  \[(2+a)d^2 + (a-8)d + 2 \pm (2d-2-ad)\sqrt{5d^2-6d+1} < 0.\]
  The left hand side of this equation can be factored:
  \begin{equation}
    \label{eqn:noreps-facts}
    \left(-d-1\pm\sqrt{5d^2-6d+1}\right)\left((3-a)d-1\pm\sqrt{1-6d+5d^2}\right) < 0.
  \end{equation}
  The zeros of the left factor (if they exist) occur when
  \[(-d-1)^2 = 5d^2-6d+1,\]
  i.e.\ when $d = 0$ or $d = 2$. Likewise, the zeros of the right factor (if
  they exist) occur when
  \[((3-a)d-1)^2 = 5d^2-6d+1,\]
  or equivalently when
  \[d((-a^2+6a-4)d-2a) = 0.\]
  This equation holds when $d = 0$ and when $d = 2a/(-a^2+6a-4)$. Note that
  for all $2 \leq a \leq 3$, we have $1 \leq 2a/(-a^2+6a-4) < 2$.

  Suppose the sign of the square roots in \eqref{eqn:noreps-facts} is
  positive. Then, both factors in \eqref{eqn:noreps-facts} go to $\infty$ as
  $d \to \infty$, and both possible zeros of both factors are actually zeros
  of these factors. If $d \geq 2$, then $d$ is at least as large as all of
  the zeros of the lefthand side of \eqref{eqn:noreps-facts}, which means that
  the lefthand side of \eqref{eqn:noreps-facts} is nonnegative, a contradiction.
  The only remaining option is $d = 1$. {In this case,  left hand side of \eqref{eqn:noreps-facts} is again nonnegative, a contradiction.}

  %$d$ is larger than 2 of the zeros of the lefthand side of \eqref{eqn:noreps-facts} (counted with   multiplicity), but $d$ is less than or equal to the other two zeros. So, the

  Next, suppose the sign of the square roots in \eqref{eqn:noreps-facts} is
  negative. Then, both factors of the lefthand side of \eqref{eqn:noreps-facts}
  go to $-\infty$ as $d \to \infty$, and none of the possible zeros of the
  lefthand side of \eqref{eqn:noreps-facts} is an actual zero. This implies
  that the lefthand side of \eqref{eqn:noreps-facts} is always positive,
  a contradiction.
\end{proof}

\subsection{Proof of Theorem~\ref{thm:14ext}}
\label{sec:14proof}
Throughout this proof, the symbol `$\leq$' between two convex generators
  means `$\leq_{P(a,1),B(c)}$.'
\begin{proof}[Proof of Theorem~\ref{thm:14ext}]

  Suppose by way of contradiction that $c < 2+a/2$ and that $P(a,1)$
  symplectically embeds into $B(c)$. By Proposition~\ref{prop:allbd},
  there exists some $d_a$ such that for any $d > d_a$, there is no convex
  generator $\Lambda$ satisfying $\Lambda \leq e_{1,1}^d$.
  For any $d \in \Z_{>0}$, define
  \[N_d = \#\{\Lambda\ |\ \Lambda \leq e_{1,1}^d\},\]
  and let
  \[N = \sum_{d = 1}^{d_a} dN_d.\]
  Note that for any $d$, there are a finite number of convex generators with
  index equal to $I(e_{1,1}^d)$, which implies that the $N_d$ and $N$ are
  finite.

  Now, fix any integer $D > N$. The generator $\Lambda' = e_{1,1}^D$ is minimal
  for $B(c)$ by \cite[Lemma 2.1]{Beyond}. So, we can apply
  Theorem~\ref{thm:119ext} to obtain a convex generator $\Lambda$, an integer
  $n$, and factorizations $\Lambda' = \Lambda_1'\cdots\Lambda_n'$ and
  $\Lambda = \Lambda_1\cdots\Lambda_n$ satisfying the three numbered conditions
  of Theorem~\ref{thm:119ext}.

  Suppose there exists some $i \neq j$ such that
  $\Lambda_i' = \Lambda_j'$ and $\Lambda_i = \Lambda_j$. Then, let
  $\Gamma = \Lambda_i = \Lambda_j$, and write
  $\Lambda_i' = \Lambda_j' = e_{1,1}^d$ for some $d$. Condition (i) of
  Theorem~\ref{thm:119ext} implies that $\Gamma \leq e_{1,1}^d$, and condition
  (iii) of Theorem~\ref{thm:119ext} implies
  \[I(\Gamma\cdot\Gamma) = I(\Lambda_i'\cdot\Lambda_j') = I(e_{1,1}^{2d}).\]
  However, $\Gamma$ and $d$ then contradict the statement of
  Proposition~\ref{prop:noreps}. So, for all $i \neq j$, we must have
  either $\Lambda_i' \neq \Lambda_j'$ or $\Lambda_i \neq \Lambda_j$. \\

\medskip

{ We claim that with this constraint, it is impossible to have
$I(\Lambda') = I\left(\prod_{i=1}^n\Lambda_i'\right)$:}  \\

\smallskip
{As before, by Proposition \ref{prop:allbd},   there exists some $d_a$ such that for any $d > d_a$, there is no convex generator $\Lambda$ satisfying $\Lambda \leq e_{1,1}^d$. Thus for all $d>d_a$,
  \[
  N_d = \#\{\Lambda\ |\ \Lambda \leq e_{1,1}^d\} =0.
  \]
Assuming $d \leq d_a$, the maximum possible value of $I\left(\prod_{i=1}^n\Lambda_i'\right)$ must then occur when there is precisely one
  choice of $i$ such that $\Lambda_i' = e_{1,1}^d$ and $\Lambda_i = \eta$, for any $\eta \leq e_{1,1}^d$.}
  
{ When $\Lambda_i' = e_{1,1}^d$ and $\Lambda_i = \eta$ we obtain
\[
I\left(\prod_{i=1}^n \Lambda_i'\right) = I\left(\prod_{d=1}^{d_a}\prod_{i=1}^{N_d}e_{1,1}^d\right) = I\left(e_{1,1}^{\sum_{d=1}^{d_a} dN_d}\right) = I(e_{1,1}^N) = N(N+3).\]}

{ If we again fix any integer $D > N$ then the generator $\Lambda' = e_{1,1}^D$ is minimal  for $B(c)$ by \cite[Lemma 2.1]{Beyond}.  Thus
\[
I(\Lambda') = I(e_{1,1}^D) = D(D+3) > N(N+3).
\]
Any other choice of the $\Lambda_i$'s appearing in the factorization of $\Lambda$ must be a subset of the above choice of $\Lambda_i$'s.  As a result, $I\left(\prod_{i=1}^n \Lambda_i'\right)$ will be even smaller. Thus, there are no possible choices for the $\Lambda_i$ such that
$I\left(\prod_{i=1}^n \Lambda_i'\right) = I(\Lambda')$,  contradicting the fact that $I \left( \prod_{I=1}^n \Lambda' \right) = I \left( \Lambda'\right)$.} \\

\bigskip
  
To obtain the statement that if $P(\frac{\sqrt{7}-1}{\sqrt{7}-2},1)$ symplectically embeds into $B(c)$ then $c \geq 2+a/2$ we appeal to the following limiting argument.    Let $a_0=\frac{\sqrt{7}-1}{\sqrt{7}-2}$.  We have just proven, for all $a < a_0$  if $P(a,1)$ symplectically embeds into $B(c)$ then $c \geq 2+\frac{a}{2}.$  Thus if $P(a_0,1)$ symplectically embeds into $B(c)$ then $c \geq 2+\frac{a_0}{2}.$    
\end{proof}

%%%%%%%%%%%%%%%%%%%%%%%%%%%%%%

\appendix

\section{Difficulties extending Theorem~\ref{thm:14ext} via the Hutchings criterion}
\label{sec:14opt}

Theorem~\ref{thm:14ext} implies that symplectic folding yields optimal
embeddings of $P(a,1)$ into $B(c)$ whenever
\[2 \leq a \leq \frac{\sqrt{7}-1}{\sqrt{7}-2} = 2.54858\dots\]
For $a > \frac{\sqrt{7}-1}{\sqrt{7}-2}$, our method of proving Theorem~\ref{thm:14ext}
breaks down. More specifically, the proof of Proposition~\ref{prop:allbd} relies
on the fact that $a <  \frac{\sqrt{7}-1}{\sqrt{7}-2}$ in order to conclude that the coefficient of $d^2$ in \eqref{eqn:allbd-dquad} is positive, yielding a contradiction
for sufficiently large $d$. When $a$ is larger than this value, the conclusions
of the proposition will no longer hold, so we will no longer be able to
consider convex generators $e_{1,1}^d$ for arbitrarily large $d$ in the
proof of Theorem~\ref{thm:14ext}.

It is natural to ask whether this upper bound on $a$ can be extended by
applying the Hutchings criterion and using different methods of proof than
those used in Theorem~\ref{thm:14ext}.   %As this discussion concerns approaches which don't work, {the more technical proofs} are omitted from this version of this paper, but they can be found in Appendix A of the most recent arXiv version.  

Since $e_{1,1}^d$ is a minimal
generator for $B(c)$ for all $d \geq 1$, we might try applying the Hutchings
criterion to $e_{1,1}^d$ for some specific, not necessarily large choice of
$d$, allowing us to avoid the use of Proposition~\ref{prop:allbd}.
We would then argue as follows. For some fixed $a > \frac{\sqrt{7}-1}{\sqrt{7}-2}$, suppose
we have some $c < 2+a/2$ such that $P(a,1)$ symplectically embeds into $B(c)$.
We can apply the modified Hutchings criterion, Theorem~\ref{thm:119ext}, to $\Lambda' = e_{1,1}^d$ to obtain an
integer $n$, a convex generator $\Lambda$, and factorizations
$\Lambda' = \Lambda_1'\cdots\Lambda_n'$ and $\Lambda = \Lambda_1\cdots\Lambda_n$.

To obstruct the symplectic embedding we assumed to exist, we must show that no
possible choice of the $\Lambda_i$ and $\Lambda_i'$ exists. In particular, we
must show that there exists no convex generator $\Gamma$ such that
$\Gamma \leq_{P(a,1),B(c)} e_{1,1}^d$: otherwise, we will not be able to
obstruct the possibility that $n = 1$, $\Lambda_1' = \Lambda' = e_{1,1}^d$,
and $\Lambda_1 = \Lambda = \Gamma$.

However, we can actually prove that for any $a > \frac{\sqrt{7}-1}{\sqrt{7}-2}$ and
any $d \geq 1$, there is some $c < 2+a/2$ and some convex generator
$\Gamma$ such that $\Gamma \leq_{P(a,1),B(c)}e_{1,1}^d$ for every $d \geq 1$.
This implies that it is impossible to improve on the results of
Theorem~\ref{thm:14ext} by applying the Hutchings criterion to convex
generators of the form $e_{1,1}^d$. The proof of this fact relies on the following construction of a convex generator satisfying certain constraints.

\begin{lemma}
  \label{lem:constructfvm}
  Let $d \geq 9$. Then, there exists some convex generator
  $\Lambda = e_{1,0}^Fe_{m,1}e_{0,1}^V$ such that, 
    \begin{equation}
    \label{eqn:optlt-hbd}
    0 \leq F \leq \frac{1}{2}\left(3d-1+\sqrt{7d^2-3}\right),
  \end{equation}
  \begin{equation}
    \label{eqn:optlt-Vdef}
    V = \frac{1}{2}\left(3d-2-\sqrt{7d^2-6d+4F}\right),
  \end{equation}
  and,
  \begin{equation}
    \label{eqn:optlt-mdef}
    m = \frac{1}{2}\left(3d-2+\sqrt{7d^2-6d+4F}\right) - F.
  \end{equation}
\end{lemma}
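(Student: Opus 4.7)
The plan is to reparametrize in terms of the square root appearing in the formulas. Setting $s := \sqrt{7d^2-6d+4F}$, we have $F = (s^2 - 7d^2 + 6d)/4$, $V = (3d-2-s)/2$, and after substitution $m = (7d^2 - 4 + 2s - s^2)/4$. In this variable, finding a valid $\Lambda$ reduces to producing an integer $s$ of a specified parity in an explicit interval.

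First I would show that the conditions $F, V \in \Z_{\geq 0}$, $m \in \Z_{\geq 1}$, and $F \leq \tfrac{1}{2}(3d-1+\sqrt{7d^2-3})$ are equivalent to $s$ being a nonnegative integer with $s \equiv d \pmod{2}$ and
\[
\sqrt{7d^2-6d} \;\leq\; s \;\leq\; 1 + \sqrt{7d^2-7}.
\]
The parity condition $s \equiv d \pmod{2}$ is forced by the integrality of $V$, and a short mod-$4$ computation shows it automatically makes $F$ and $m$ integers as well. The left inequality is exactly $F \geq 0$; the upper bound $s \leq 1 + \sqrt{7d^2-7}$ encodes $m \geq 1$, and for $d \geq 9$ it is the strongest of the upper bounds (the hypothesis $V \geq 0$, i.e.\ $s \leq 3d - 2$, and the stated $F$-upper bound, i.e.\ $s \leq 1 + \sqrt{7d^2-3}$, are both weaker).

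The key technical step is to show that this interval has length at least $2$, because any closed interval of length at least $2$ contains two consecutive integers and therefore an integer of each parity. The inequality $1 + \sqrt{7d^2-7} - \sqrt{7d^2-6d} \geq 2$ rearranges, after isolating a square root and squaring twice (legitimate since $3d - 4 > 0$ for $d \geq 2$), to $(d-1)(d-8) \geq 0$, which holds for all $d \geq 8$ and in particular for $d \geq 9$. Picking any $s$ of the correct parity in the interval and defining $F, V, m$ by the formulas above yields the desired $\Lambda = e_{1,0}^F e_{m,1} e_{0,1}^V$; it is a convex generator because its edges appear in strictly decreasing slope order $0 > -1/m > -\infty$. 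I expect the main obstacle to be the bookkeeping in the second step---checking the parity and divisibility claims and confirming that the $m \geq 1$ constraint dominates the other upper bounds---rather than any conceptually difficult manipulation.
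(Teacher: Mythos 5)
Your proof is correct, and it takes a genuinely different route from the paper's. The paper fixes $k$ with $k^2$ the largest perfect square below $7d^2$, sets $C = 7d^2 - k^2$, proves $0 < C < 6d$, and then runs a case analysis on $C \bmod 4$ (ruling out $C \equiv 1$, and exhibiting an explicit $F$ in the cases $C \equiv 3$ and $C \equiv 0,2$ that forces $\sqrt{7d^2-6d+4F}$ to equal $k+1$ or $k$ respectively). You instead change variables to $s = \sqrt{7d^2-6d+4F}$ and reduce everything to finding an integer $s \equiv d \pmod 2$ in the window $\bigl[\sqrt{7d^2-6d},\, 1+\sqrt{7d^2-7}\bigr]$, which you guarantee by showing the window has length at least $2$; all your supporting computations check out (the translation of each constraint into a bound on $s$, the mod-$4$ verification that $s \equiv d$ forces $F, m \in \Z$, the dominance of the $m \geq 1$ bound over $V \geq 0$ and over \eqref{eqn:optlt-hbd} for $d \geq 8$, and the reduction of the length condition to $(d-1)(d-8) \geq 0$). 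Your argument is arguably cleaner: it replaces the paper's three-way residue analysis with a single pigeonhole-style observation, and it makes transparent why the threshold $d \geq 9$ (really $d \geq 8$) appears. Two small remarks: you impose $m \geq 1$ where the paper only needs $m \geq 0$ (a harmless strengthening, and in fact the more natural requirement for $\Lambda$ to literally have the three stated edge types), and your phrase ``equivalent to'' should really be ``implied by,'' since \eqref{eqn:optlt-hbd} corresponds to the weaker condition $m \geq 0$ --- but you effectively acknowledge this when you note that the $m\geq 1$ bound is the binding one, so nothing is at stake.
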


%The proof of this lemma follows by direct, albeit brute force, calculations.  This lemma then shows that applying the Hutchings criterion to $e_{1,1}^d$ for any $d \geq 1$ cannot improve upon Theorem~\ref{thm:14ext}.

\begin{proof}
  Suppose we have some choice of $F$, $V$, and $m$ that satisfies
  \eqref{eqn:optlt-hbd}, \eqref{eqn:optlt-Vdef}, and \eqref{eqn:optlt-mdef}.
  Notice that $V \geq 0$ whenever
  \[(3d-2)^2 \geq 7d^2-6d+4F,\]
  i.e.\ whenever
  \begin{equation}
    \label{eqn:optlt-Vpos}
    \frac{1}{2}(d^2-3d+2) \geq F.
  \end{equation}
  On the other hand, using \eqref{eqn:optlt-hbd} and the fact that
  $d \geq 9$, we have
  \[F \leq \frac{1}{2}(3d-1+\sqrt{7d^2-3}) \leq \frac{1}{2}(d^2-3d+2),\]
  so that \eqref{eqn:optlt-Vpos} is true and $V \geq 0$. Similarly,
  $m \geq 0$ whenever
  \[\sqrt{7d^2-6d+4F} \geq 2F+2-3d,\]
  or equivalently, whenever
  \[F \leq \frac{1}{2}\left(3d-1+\sqrt{7d^2-3}\right).\]
  This inequality is true by \eqref{eqn:optlt-hbd},
  so we must have $m \geq 0$.

  Since $V$ and $m$ are necessarily nonnegative, it remains to find
  some $F$ satisfying \eqref{eqn:optlt-hbd} such that the definitions
  of $V$ and $m$ in \eqref{eqn:optlt-Vdef} and \eqref{eqn:optlt-mdef}
  are integers. Assuming that $\sqrt{7d^2 - 6d+4F}$ is an integer, this
  square root will be even if and only if $d$ is even, which implies that
  $V$ and $m$ will both be integers no matter what the parity of $d$ is.
  So, it suffices to show that we can pick $\sqrt{7d^2-6d+4F}$ to be an
  integer.
  
  Let $k^2$ be the largest perfect square less than $7d^2$. Then, we have
  \begin{equation}
    \label{eqn:optlt-cdef}
    7d^2 - C = k^2
  \end{equation}
  for some $C > 0$. Because $7d^2 < (k+1)^2$, the above equation
  gives us
  \begin{equation}
    \label{eqn:optlt-cbd1}
    C = 7d^2 - k^2 < (k+1)^2 - k^2 = 2k+1 = 2\sqrt{7d^2-C}+1.
  \end{equation}
  The righthand side of this inequality is less than $6d$ whenever
  \[(6d-1)^2 > 28d^2-4C,\]
  or equivalently, whenever
  \begin{equation}
    \label{eqn:optlt-cbd1b}
    8d^2 - 12d + 4C+1 > 0.
  \end{equation}
  The discriminant of the lefthand side of this inequality is
  \[144 - 128C - 32 = 112 - 128C,\]
  which is negative because $C \geq 1$. So,
  \eqref{eqn:optlt-cbd1b} is true, which means that the righthand
  side of \eqref{eqn:optlt-cbd1} is less than $6d$. We then obtain
  \begin{equation}
    \label{eqn:optlt-cbd2}
    C < 6d.
  \end{equation}

  There are now 3 cases to consider, depending on the
  residue class of $C$ modulo 4.
  \begin{enumerate}
    \item Suppose that $C \equiv 1 \bmod{4}$. Taking \eqref{eqn:optlt-cdef}
      mod 4 gives us
      \[-d^2 - 1 \equiv k^2 \bmod{4},\]
      or equivalently,
      \[3 \equiv k^2 + d^2 \bmod{4}.\]
      However, the only squares mod 4 are 0 and 1, so this is impossible.
      Thus, we cannot have $C \equiv 1 \bmod{4}$.
    \item Suppose that $C \equiv 3 \bmod{4}$. Then, we define
      \[F = \frac{1}{2}\left(3d - \frac{C-1}{2} + \sqrt{7d^2-C}\right).\]
      Because $C \geq 3$, this choice of $F$ satisfies the upper bound
      on $F$ given by \eqref{eqn:optlt-hbd}, and \eqref{eqn:optlt-cbd2} gives
      us
      \[F > \frac{1}{2}\left(3d-\frac{C}{2}\right) = \frac{6d-C}{4} > 0,\]
      so that the lower bound of \eqref{eqn:optlt-hbd} is also satisfied.
      Moreover, \eqref{eqn:optlt-cdef} and the fact that $C$ is odd
      imply that $d$ and $\sqrt{7d^2-C} = k$ have opposite parity. So, no
      matter what the parity of $d$ is, $F$ is an integer. Finally, we have
      \[\sqrt{7d^2-6d+4F} = \sqrt{7d^2-C+2\sqrt{7d^2-C}+1} = \sqrt{\left(\sqrt{7d^2-C}+1\right)^2} = k+1,\]
      which is an integer, as desired.
    \item Suppose that $C \equiv 0,2 \bmod{4}$. Then, we pick
      \[F = \frac{6d-C}{4}.\]
      Notice that $F > 0$ by \eqref{eqn:optlt-cbd2}, while
      \[F \leq \frac{1}{2}(3d-1+\sqrt{7d^2-3})\]
      is equivalent to
      \[1-C \leq \sqrt{7d^2-3} = k,\]
      which is true by definition of $C$. Thus, \eqref{eqn:optlt-hbd}
      is satisfied. Moreover, taking \eqref{eqn:optlt-cdef} mod 4 gives
      \[-d^2-C \equiv k^2 \bmod{4},\]
      or equivalently,
      \begin{equation}
        \label{eqn:optlt-cmod}
        -C \equiv k^2+d^2 \bmod{4}.
      \end{equation}
      If $C \equiv 2 \bmod{4}$, then \eqref{eqn:optlt-cmod} implies that
      $k^2 \equiv d^2 \equiv 1 \bmod{4}$, so $d$ must be odd. In this
      case, $6d - C \equiv 0 \bmod{4}$, whence our choice of $F$ is
      an integer. Likewise, if $C \equiv 0 \bmod{4}$, then \eqref{eqn:optlt-cmod}
      implies that $k^2 \equiv d^2 \equiv 0 \bmod{4}$, so $d$
      must be even. In this case, both $6d$ and $C$ are divisible by 4, so
      $F$ is again an integer. Finally, we have
      \[\sqrt{7d^2-6d+4F} = \sqrt{7d^2-C} = k,\]
      which is an integer, as desired.
  \end{enumerate}
\end{proof}

We now use the above lemma to prove that applying the Hutchings criterion to
$e_{1,1}^d$ for any $d \geq 1$ cannot improve upon Theorem~\ref{thm:14ext}.
\begin{proposition}
  \label{thm:optlt}
  Let
  \[a \geq \frac{\sqrt{7}-1}{\sqrt{7}-2} = 2.54858\dots\] 
  For any $d \geq 1$, there exists some $\epsilon > 0$ and some convex
  generator $\Lambda$ such that $\Lambda \leq_{P(a,1),B(c)} e_{1,1}^d$,
  where $c = 2+a/2-\epsilon$.
\end{proposition}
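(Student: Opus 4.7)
My plan is to construct, for each $d \geq 1$ and each $a \geq a_0 := \frac{\sqrt{7}-1}{\sqrt{7}-2}$, a convex generator $\Lambda$ and a value $\epsilon > 0$ witnessing $\Lambda \leq_{P(a,1),B(c)} e_{1,1}^d$ at $c = 2 + a/2 - \epsilon$. The natural family to search is $\Lambda = e_{1,0}^F e_{M,1} e_{0,1}^V$, together with the degenerate generators on one axis; each such $\Lambda$ automatically has $h(\Lambda) = 0$. Normalizing so that $F + M + V = 3d - 2$ forces $x(\Lambda) + y(\Lambda) = 3d - 1$, which makes condition (3) of Definition~\ref{cobordismineq} hold with equality. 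Setting $Y := V + 1$ and using Lemma~\ref{lem:PickI2} to rewrite the index constraint $I(\Lambda) = d(d+3)$ as a polynomial identity, one obtains $F = Y^2 - 3dY + d(d+3)/2$, and the symplectic action reduces to
\[
A_{P(a,1)}(\Lambda) = x(\Lambda) + a\, y(\Lambda) = (3d-1) + (a-1) Y.
\]
Condition (2) with strict inequality therefore amounts to the single bound $Y < \frac{(a-2) d + 2}{2(a-1)}$, subject to the integrality and nonnegativity of $F$ and $M$.

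I would then split by the size of $d$. For $d \leq 3$, which is the range $d < a + 1$ available when $a \geq a_0 > 2$, I would take the degenerate $\Lambda = e_{1,0}^{d(d+3)/2}$ on the $x$-axis: condition (1) is the axis case of Lemma~\ref{lem:PickI2}, condition (3) reduces to $(d-1)(d-2) \geq 0$, and condition (2) reduces to $d < a+1$. For the five intermediate values $4 \leq d \leq 8$, I would take $Y = 1$, giving $F = (d-1)(d-2)/2$ and $M = 3d - 2 - F$, and check directly that $M \geq 1$ and that condition (2) reduces to $a > 2$. For $d \geq 9$, I would apply Lemma~\ref{lem:constructfvm} to obtain a generator $\Lambda = e_{1,0}^F e_{M,1} e_{0,1}^V$ parametrized by an integer $S := \sqrt{7d^2 - 6d + 4F}$, with $x(\Lambda) = \frac{3d-2+S}{2}$ and $y(\Lambda) = \frac{3d-S}{2}$; conditions (1) and (3) follow by direct algebraic verification, and condition (2) rearranges to the lower bound
\[
S > \frac{d(2a-1) - 2}{a - 1}.
\]

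The hard part will be verifying this lower bound on $S$ in the critical regime, since at $a = a_0$ one has the identity $\frac{2a_0 - 1}{a_0 - 1} = \sqrt{7}$, which makes both sides asymptotically equal to $\sqrt{7}\, d$. The argument will hinge on combining two ingredients: first, the expression $\frac{2a-1}{a-1} = 2 + \frac{1}{a-1}$ is decreasing in $a$, so for $a \geq a_0$ the right-hand side is bounded above by $\sqrt{7}\,d - \frac{2}{a-1}$; and second, Lemma~\ref{lem:constructfvm} guarantees $S \geq \lfloor \sqrt{7d^2}\rfloor > \sqrt{7}\,d - 1$. At $a = a_0$ the required inequality reduces to the numerical fact $2\sqrt{7} - 4 > 1$, i.e.\ $\sqrt{7} > 5/2$; for $a$ bounded away from $a_0$ one gains extra slack from the offset $-\frac{2}{a-1}$ together with the monotonicity, and the remaining case $a > 3$ amounts to checking that the ratio $\frac{a-3}{(a-1)\sqrt{7} - (2a-1)}$ stays below $d$, which holds uniformly for $d \geq 9$ because this ratio approaches $\sqrt{7} + 2 < 9$ as $a \to \infty$. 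Once the strict inequality in condition (2) is secured, choosing $\epsilon$ smaller than the resulting gap completes the construction.
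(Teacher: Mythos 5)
Your proposal is correct and follows essentially the same route as the paper: the same family $e_{1,0}^F e_{M,1} e_{0,1}^V$ normalized so that $x(\Lambda)+y(\Lambda)=3d-1$, explicit small generators for $d\leq 8$, and Lemma~\ref{lem:constructfvm} for $d\geq 9$, with the action inequality reduced to a lower bound on $S=\sqrt{7d^2-6d+4F}$ that holds because $S>\sqrt{7}\,d-1$ and $2\sqrt{7}-4>1$ (the paper packages this last step as monotonicity in $d$ of the ratio $\frac{S-d-2}{S-2d}$ toward the limit $\frac{\sqrt{7}-1}{\sqrt{7}-2}$). One harmless slip: the limit of $\frac{a-3}{(a-1)\sqrt{7}-(2a-1)}$ as $a\to\infty$ is $\frac{1}{\sqrt{7}-2}=\frac{\sqrt{7}+2}{3}\approx 1.55$, not $\sqrt{7}+2$, but either value is below $9$, so your conclusion stands.
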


\begin{proof}
  First, note that when $d = 1$, we have $e_{1,0}^2 \leq_{P(a,1),B(c)} e_{1,1}$
  for any $c \geq 2$, and when $d = 2$, we have
  $e_{1,0}^5 \leq_{P(a,1),B(c)} e_{1,1}^2$ for any $c \geq 2.5$. Since 2
  and $2.5$ are less than $2+a/2$ for any possible value of $a$, 
  the desired statement follows for $d = 1,2$. Moreover, if $3 \leq d \leq 8$,
  we can define $\Lambda = e_{1,0}^Fe_{m,1}$ where,
  \[F = \frac{1}{2}(d^2-3d+2),\]
  and,
  \[m = \frac{1}{2}(-d^2+9d-6).\]
  $F$ and $m$ are positive integers for all $3 \leq d \leq 8$. In
  addition we have,
  \begin{equation}
    \label{eqn:optlt-hcond8}
    \begin{array}{lcl}
    x(\Lambda) + y(\Lambda) &=& F+m+1 \\
     &=& \frac{1}{2}(6d-4)+1 \\
     &= &3d-1 \\
     &= &x(e_{1,1}^d) + y(e_{1,1})^d + m(e_{1,1}^d) - 1,\\
     \end{array}
  \end{equation}
  and \eqref{eqn:PickI} yields,
  \begin{equation}
    \label{eqn:optlt-icond8}
        \begin{array}{lcl}
    I(\Lambda) &=& 2F+m+2F+m+2 \\
    & =& 2d^2-6d+4 - d^2 + 9d - 6 + 2 \\
    & =& d^2+3d \\
    &=& I(e_{1,1}^d). \\
    \end{array}
  \end{equation}
  Finally,
  \[A_{P(a,1)}(\Lambda) = x(\Lambda) + ay(\Lambda) = 3d - 2 + a,\]
  so that $A_{P(a,1)}(\Lambda) < (2+a/2)d$ whenever,
  \[a > \frac{2(d-2)}{d-2} = 2.\]
  Because $a > 2$ by assumption, we must have $A_{P(a,1)}(\Lambda) < (2+a/2)d$.
  Then, for any $0 < \epsilon \leq (2+a/2) - A_{P(a,1)}(\Lambda)/d$,
  we obtain
  \[A_{P(a,1)}(\Lambda) \leq (2+a/2-\epsilon)d = A_{B(2+a/2-\epsilon)}(e_{1,1}^d).\]
  This equation along with \eqref{eqn:optlt-hcond8} and \eqref{eqn:optlt-icond8}
  implies that $\Lambda \leq_{P(a,1),B(c)} e_{1,1}^d$ for $c = 2+a/2-\epsilon$,
  as desired.

  We are left with the case where $d \geq 9$. Here, we can apply
  Lemma~\ref{lem:constructfvm} to construct some convex generator
  $\Lambda = e_{1,0}^Fe_{m,1}e_{0,1}^V$ satisfying \eqref{eqn:optlt-hbd},
  \eqref{eqn:optlt-Vdef}, and \eqref{eqn:optlt-mdef}. We will prove
  that $\Lambda \leq_{P(a,1),B(c)} e_{1,1}^d$ for some $c$ of the desired
  form. First, notice that,
  \begin{align}
    \label{eqn:optlt-hcond}
    \begin{split}
    x(\Lambda) + y(\Lambda) &= (F + m) + (V+1)\\
                            &= \frac{1}{2}\left(3d-2-\sqrt{7d^2-6d+4F}\right) + \frac{1}{2}\left(3d-2+\sqrt{7d^2-6d+4F}\right)+1\\
                            &= 3d - 1  \\
                            &= x(e_{1,1}^d) + y(e_{1,1}^d) + m(e_{1,1}^d) - 1.
    \end{split}
  \end{align}
  Moreover, using \eqref{eqn:PickI} and substituting in \eqref{eqn:optlt-hcond}
  gives,
  \begin{align*}
    I(\Lambda) &= 2\A(\Lambda) + x(\Lambda) + y(\Lambda) + m(\Lambda)\\
               &= 2F(V+1)+m(2V+1) + 3d - 1 + F + V + 1\\
               &= 2F(V+1)+2Vm + 3d - 1 + F + V + m + 1\\
               &= 2V(F+m) + 2F + 3d-1 + x(\Lambda) + y(\Lambda)\\
  \end{align*}
  Substituting in \eqref{eqn:optlt-hcond} again and using the definitions of
  $m$ and $V$ produces,
  \begin{align}
    \label{eqn:optlt-icond}
    \begin{split}
    I(\Lambda) &= \frac{1}{2}\left(3d-2-\sqrt{7d^2-6d+4F}\right)\left(3d-2+\sqrt{7d^2-6d+4F}\right) + 2F + 6d-2\\
               &= 2-3d+d^2-2F + 2F + 6d-2\\
               &= d^2 + 3d = I(e_{1,1}^d)
    \end{split}
  \end{align}

  In light of \eqref{eqn:optlt-icond} and \eqref{eqn:optlt-hcond}, we see
  that $\Lambda \leq_{P(a,1),B(c)} e_{1,1}^d$ if and only if
  $A_{P(a,1)}(\Lambda) \leq A_{B(c)}(e_{1,1}^d)$. We will show,
  \begin{equation}
    \label{eqn:optlt-acond}
    A_{P(a,1)}(\Lambda) < \left(2+\frac{a}{2}\right)d.
  \end{equation}
  Then, for any $0 < \epsilon \leq \left(2+a/2\right) - A_{P(a,1)}(\Lambda) / d$,
  we have,
  \[A_{P(a,1)}(\Lambda) \leq \left(2+\frac{a}{2}-\epsilon\right)d = A_{B(2+a/2-\epsilon)}(e_{1,1}^d),\]
  so that $\Lambda \leq_{P(a,1),B(c)} e_{1,1}^d$, where $c = 2+a/2-\epsilon$.
  
  To prove \eqref{eqn:optlt-acond}, we first use
  \eqref{eqn:optlt-hcond} and \eqref{eqn:optlt-Vdef} to compute $A_{P(a,1)}(\Lambda)$:
  \begin{align*}
    A_{P(a,1)}(\Lambda) = x(\Lambda) + ay(\Lambda) &= 3d-1 + (a-1)y(\Lambda)\\
                                                   &= 3d-1 + (a-1)\left(\frac{1}{2}\left(3d-2-\sqrt{7d^2-6d+4F}\right)+1\right)
  \end{align*}
  Using this calculation, \eqref{eqn:optlt-acond} is equivalent to,
  \[(a-1)\left(3d-\sqrt{7d^2-6d+4F}\right) < (a-2)d+2.\]
  Rearranging produces,
  \[\sqrt{7d^2-6d+4F}-d-2 < a\left(\sqrt{7d^2-6d+4F}-2d\right).\]
  Since $\sqrt{7d^2-6d+4F}-2d \geq \sqrt{7d^2-6d}-2d > 0$ for all
  $d > 2$, the above inequality becomes,
  \begin{equation}
    \label{eqn:optlt-acond-equiv}
    \frac{\sqrt{7d^2-6d+4F}-d-2}{\sqrt{7d^2-6d+4F}-2d} < a.
  \end{equation}
  The lefthand side of \eqref{eqn:optlt-acond-equiv} is increasing
  for all $F$ and all $d > 2$, and its limit as $d \to \infty$ is
  \[\frac{\sqrt{7}-1}{\sqrt{7}-2} = 2.54858\dots\]
  Since $a$ is at least this limit value by assumption and $d \geq 9$, we
  conclude that \eqref{eqn:optlt-acond-equiv} is true, hence so is
  \eqref{eqn:optlt-acond}.
\end{proof}

%%%%%%%%%%%%%%%%%
\subsection{Trying other convex generators}
\label{sec:mingens}
Now that we know we cannot use any generator of the form $e_{1,1}^d$ to improve
upon the results of Theorem~\ref{thm:14ext}, we might ask if we can apply the
Hutchings criterion to any other generator for the ball.

%minimal generator for the ball. However, minimal

First, we investigate other possibilities for minimal generators.  These must \textit{uniquely} minimize the symplectic action among all convex generators of equal index. The following lemma shows that in every index grading other than those of the $e_{1,1}^d$, the action with respect to any ball is non-uniquely minimized, so that the $e_{1,1}^d$ are the only minimal generators for $B(c)$.  %The proof of this lemma is by direct construction.

\begin{lemma}
  \label{lem:noaltmins}
  Let $c > 0$, and  let $k$ be a positive integer such that
  $2k \neq I(e_{1,1}^d)$ for all $d \geq 1$. Then, there exist two distinct
  convex generators which minimize the symplectic action with respect to $B(c)$
  among convex generators with index $2k$.
\end{lemma}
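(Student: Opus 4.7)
The plan is to reduce the statement to a combinatorial question about lattice points in a triangle, then construct two distinct minimizers explicitly by exploiting the reflection symmetry $e_{a,b}\leftrightarrow e_{b,a}$ across the line $y=x$.

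First I would translate the hypotheses into lattice-point data. For any all-$e$ convex generator $\Lambda$, each edge $e_{a,b}$ contributes $c\max(a,b)$ to the symplectic action, so $A_{B(c)}(\Lambda)=c\cdot s(\Lambda)$ where $s(\Lambda):=\max\{x+y:(x,y)\in\Lambda\}$; and by Lemma~\ref{lem:PickI2}, the condition $I(\Lambda)=2k$ is equivalent to $L(\Lambda)=k+1$. Since $\Lambda\subset T_n:=\{(x,y):x,y\geq 0,\ x+y\leq n\}$ forces $L(\Lambda)\leq\binom{n+2}{2}$, with equality attained only by $e_{1,1}^n$, the minimum of $s(\Lambda)$ subject to $L(\Lambda)=k+1$ equals the unique integer $d$ with $\binom{d+1}{2}<k+1\leq\binom{d+2}{2}$. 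The equality case $k+1=\binom{d+2}{2}$ is exactly the excluded $2k=d(d+3)$, so $r:=\binom{d+2}{2}-(k+1)$ lies in $[1,d]$, and I am reduced to producing two distinct all-$e$ convex generators in $T_d$ with $L=k+1$ that touch the hypotenuse $x+y=d$.

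The reflection $\Lambda\mapsto\Lambda^*$ sending each edge $e_{a,b}$ to $e_{b,a}$ preserves $L$, $s$, and $I$, so it suffices to construct one non-palindromic such $\Lambda$; then $\{\Lambda,\Lambda^*\}$ supplies the required pair. Assume $d\geq 2$ (the case $d=1$ forces $k=1$, and the two minimizers $e_{1,0}$, $e_{0,1}$ are immediate). Write $r=T_m+s$ uniquely with $T_m:=\binom{m+1}{2}$ and $s\in[0,m]$; note $m\geq 1$ because $r\geq 1$, and $m\leq d-1$ because $T_m\leq r\leq d$ together with $d\geq 2$. Define
\[
\Lambda \;=\; e_{1,0}^{\,m-s}\; e_{s+1,1}\; e_{1,1}^{\,d-m-1}.
\]
The three edge slopes $0,\,-1/(s+1),\,-1$ are nonincreasing, so $\Lambda$ is a legitimate all-$e$ convex generator; its trailing diagonal edge lies along $x+y=d$, so $\Lambda\subset T_d$ and $s(\Lambda)=d$.

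A column-by-column count of the lattice points of $T_d$ strictly above $\Lambda$ then yields cut-off equal to $(T_m-T_{s-1})+T_s=T_m+s=r$, so $L(\Lambda)=\binom{d+2}{2}-r=k+1$. Non-palindromy of $\Lambda$ is automatic: if $s\geq 1$ the middle edge $e_{s+1,1}$ differs from its reflection $e_{1,s+1}$ since $s+1\geq 2$; if $s=0$ then $\Lambda=e_{1,0}^m\,e_{1,1}^{d-m}$, whose reflection $e_{1,1}^{d-m}\,e_{0,1}^m$ is distinct because $m\geq 1$. Hence $\Lambda$ and $\Lambda^*$ are two distinct minimizers. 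The only slightly technical step is the lattice-point count verifying that the cut-off set has size exactly $r$; everything else is formal.
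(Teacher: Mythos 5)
Your proof is correct and is essentially the paper's argument in closed form: the generator $e_{1,0}^{m-s}e_{s+1,1}e_{1,1}^{d-m-1}$ you obtain from the triangular-number decomposition of $r$ is precisely the paper's inductively defined $Y_\delta$ (with $\delta=r$), and the reflection across $y=x$ together with the lower bound $A_{B(c)}(\Lambda)\geq cd$ forced by $L(\Lambda)\geq k+1>d(d+1)/2$ are exactly the paper's two remaining steps. The only line worth adding is that a generator with $h$-labelled edges has $L(\Lambda)=k+1+h(\Lambda)/2\geq k+1$, so the same action lower bound applies and your reduction to all-$e$ generators loses nothing.
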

\begin{proof}
  The proof is by construction. Let $d$ be the smallest positive integer such
  that $I(e_{1,1}^d) > 2k$, and let $\delta = I(e_{1,1}^d)/2-k$. We construct a finite
  sequence of convex generators $Y_1,Y_2,\dots,Y_\delta$ by induction. In the
  base case, set $Y_1 = e_{1,0}e_{1,1}^{d-1}$. For all $i \geq 2$, define $Y_i$
  from $Y_{i-1}$ according to the following rules.
  \begin{enumerate}
    \item If $Y_{i-1} = e_{1,0}^ae_{1,1}^m$ for some $a$ and $m$, then
      $Y_i = e_{1,0}^{a-1}e_{2,1}e_{1,1}^{m-1}$ if $a > 1$, and
      $Y_i = e_{2,1}e_{1,1}^{m-1}$ if $a = 1$.
    \item If $Y_{i-1} = e_{1,0}^ae_{b,1}e_{1,1}^m$ for some $a$, $b$, and $m$,
      then $Y_i = e_{1,0}^{a-1}e_{b+1,1}e_{1,1}^m$ if $a > 1$, and
      $Y_i = e_{b+1,1}e_{1,1}^m$ if $a = 1$.
    \item If $Y_{i-1} = e_{a,1}e_{1,1}^m$ for some $a$ and $m$, then
      $Y_i = e_{1,0}^{d-m}e_{1,1}^m$.
  \end{enumerate}
  Conceptually, $Y_1$ is equal to $e_{1,1}^d$ but with the uppermost lattice
  point removed, and in general, $Y_i$ is equal to $Y_{i-1}$ with one
  lattice point removed. As an example, the first three $Y_i$ when $d = 3$ are
  shown in Figure~\ref{fig:othermins}.

  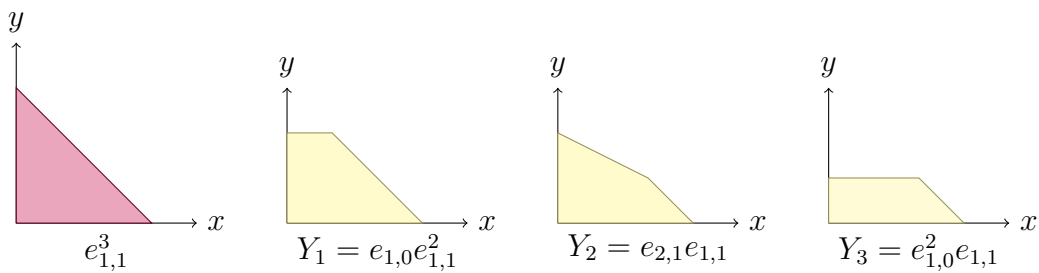
\begin{figure}[htb!]
    \centering
    \begin{tikzpicture}[scale=0.6]
     \draw[->] (-6,0) -- (-2,0) node[right] {$x$}
     node[midway, below, black] {$e_{1,1}^3$};
     \draw[->] (-6,0) -- (-6,4) node[above] {$y$};
     \filldraw[fill=purple!35!white,draw=purple!50!black] (-6,0) -- (-6,3) --
     (-3,0) -- cycle;
     
     \draw[->] (0,0) -- (4,0) node[right] {$x$}
     node[midway, below, black] {$Y_1 = e_{1,0}e_{1,1}^2$};
     \draw[->] (0,0) -- (0,3) node[above] {$y$};
     \filldraw[fill=yellow!25!white,draw=yellow!50!black] (0,0) -- (0,2) --
     (1,2) -- (3,0) -- cycle;

     \draw[->] (6,0) -- (10,0) node[right] {$x$}
     node[midway, below, black] {$Y_2 = e_{2,1}e_{1,1}$};
     \draw[->] (6,0) -- (6,3) node[above] {$y$};
     \filldraw[fill=yellow!25!white,draw=yellow!50!black] (6,0) -- (6,2) --
     (8,1) -- (9,0) -- cycle;

     \draw[->] (12,0) -- (16,0) node[right] {$x$}
     node[midway, below, black] {$Y_3 = e_{1,0}^2e_{1,1}$};
     \draw[->] (12,0) -- (12,3) node[above] {$y$};
     \filldraw[fill=yellow!20!white,draw=yellow!50!black] (12,0) -- (12,1) --
     (14,1) -- (15,0) -- cycle;
    \end{tikzpicture}
    \caption{The first few $Y_i$ when $d = 3$. The generator $e_{1,1}^d$ is
      shown on the left for comparison. Note that every generator is the same
    as the one to the left but with one lattice point removed.}
    \label{fig:othermins}
  \end{figure}

  By construction, we have $I(Y_1) = I(e_{1,1}^d) - 2$ and
  $I(Y_i) = I(Y_{i-1}) - 2$ for all $i \geq 2$. This implies that
  $I(Y_i) = I(e_{1,1}^d) - 2i$ for all $i$ and in particular that
  $I(Y_\delta) = 2k$. We claim that $Y_\delta$ minimizes the
  symplectic action with respect to $B(c)$ among all convex
  generators with index $2k$.
  
  To this end, note that for any convex generator $\Lambda$ with
  $I(\Lambda) = 2k$, we have $A_{B(c)}(\Lambda) = c(m+n)$, where $(m,n)$ is
  the vertex of $\Lambda$ at which a line of slope $-1$ is tangent. Now, $m+n$
  is the $y$-intercept of the line of slope $-1$ through $(m,n)$. For any other
  vertex $(a,b)$ of $\Lambda$, the line of slope $-1$ through $(a,b)$ is not
  tangent to $\Lambda$ and so has strictly smaller $y$-intercept than the
  tangent line of slope $-1$. This implies that $m+n \geq a+b$ for any
  vertex $(a,b)$ of $\Lambda$, with equality if and only if $(m,n) = (a,b)$.

  Now, $I(\Lambda) = 2k > I(e_{1,1}^{d-1})$ by the definition of $d$, so we
  know that $\Lambda$ contains some lattice point $(a,b)$ not contained in
  $e_{1,1}^{d-1}$. Using our above arguments, we then have
  \[A_{B(c)}(\Lambda) = c(m+n) \geq c(a+b) > c(d-1),\]
  so that in fact, $A_{B(c)}(\Lambda) \geq cd$. On the other hand,
  the line $x+y = d$ is tangent to $Y_i$ for all $i$ by construction, which
  implies that $A_{B(c)}(Y_i) = cd$. In particular, we obtain
  \[A_{B(c)}(Y_\delta) = cd \leq A_{B(c)}(\Lambda),\]
  as desired.

  Next, define $X_\delta$ to be the reflection of $Y_\delta$ about the line
  $y = x$. The line $x+y = d$ is tangent to $X_\delta$, so we have
  $A_{B(c)}(X_\delta) = A_{B(c)}(Y_\delta) = cd$. This implies that $X_\delta$
  also minimizes the symplectic action of $B(c)$ among convex generators with
  index $2k$. Finally, we note that $X_\delta \neq Y_\delta$ because $Y_\delta$
  is not symmetric about the line $y = x$.
\end{proof}

As a result, we cannot apply Theorem~\ref{thm:119ext} to any
convex generators other than the $e_{1,1}^d$ in order to understand symplectic
embeddings into the ball. Combined with Theorem~\ref{thm:optlt}, this implies
that in fact, Theorem~\ref{thm:119ext} cannot be used to extend the upper bound
on $a$ in the statement of Theorem~\ref{thm:14ext}.

The improvement of the Hutchings criterion \cite[Conj. A.3]{Beyond},
proven in \cite{K16}, allows the statement of Theorem~\ref{thm:119ext} to be
weakened so that one need only assume that all edges of $\Lambda'$ are labelled
`$e$' (as opposed to the requirement that $\Lambda'$ be minimal). 
As a result, one could conceivably improve upon Theorem~\ref{thm:14ext} using a non-minimal generator.

For instance, we could try to apply the Hutchings
criterion to the convex generators constructed in Lemma~\ref{lem:noaltmins},
which non-uniquely minimize the symplectic action in their index grading.
However, preliminary evidence suggests that these generators (as well as all
others of equal index and symplectic action) will do no better than the
$e_{1,1}^d$.

Moreover, \cite[Conj. A.3]{Beyond} would also allow one to use a generator
that does not minimize the symplectic action at all. This choice would likely
weaken the action inequality in the definition of `$\leq$' between convex
generators for most relevant cases.  Thus the Hutchings criterion should on the
whole yield weaker combinatorial conditions for non-minimal generators than it
does for minimal ones. In short, some possibility remains to extend the
statement of Theorem~\ref{thm:14ext} to larger values of $a$ using the Hutchings
criterion, but it will require methods beyond the scope of this paper.

\addcontentsline{toc}{section}{References}

\noindent \textsc{Katherine Christianson \\  UC Berkeley}\\
{\em email: }\texttt{christianson@math.berkeley.edu}\\
\medskip

\noindent \textsc{Jo Nelson \\  Columbia University}\\
{\em email: }\texttt{nelson@math.columbia.edu}\\

\end{document}